
\documentclass[12pt,a4paper]{amsart}
\usepackage{amsmath,amssymb}
\usepackage{amsthm,graphicx, color}
\usepackage[T1]{fontenc}
\usepackage{float}



\overfullrule=5pt



\newcommand{\calT}{\mathcal{T}}



\newcommand{\LHS}{\text{LHS}}

\newcommand{\ZZ}{\mathbb{Z}}

\newcommand{\MM}{\mathbb{M}}

\newcommand{\ZZp}{\ZZ'}

\DeclareMathOperator{\sgn}{sgn}

\newcommand{\circW}{W^\dagger}
\newcommand{\tmax}{t\!\!-\!\!\max}

\newtheorem{thm}{Theorem}
\newtheorem{lemma}[thm]{Lemma}

\newtheorem{coroll}[thm]{Corollary}

\newcommand{\setP}{\mathcal{P}}
\newcommand{\setH}{\mathcal{H}}

\newcommand{\smallsquare}{{\text{\scalebox{0.7}{$\square$}}}}
\newcommand{\res}{\text{res}}

\title{A multiset hook length formula and some applications} 
\date{May 9, 2011}
\author{Paul-Olivier Dehaye}
\address{Department of Mathematics\\
ETH Z\"urich\\
R\"amistrasse 101\\
8092 Z\"urich\\
Switzerland}
\email{pdehaye@math.ethz.ch}
\thanks{{The authors wish to thank the Centre National de la Recherche Scientifique, France and the Forschungsintitut f\"ur Mathematik, ETH Z\"urich, Switzerland for making this collaboration possible. }}

\keywords{integer partitions, hook length, $q$-series, congruence relations, $t$-cores}
\subjclass[2010]{11P83, 05A17, 05A15, 05A19}

\author{Guo-Niu HAN}
\address{Institut de Recherche Math\'ematique Avanc\'ee\\
Universit\'e de Strasbourg et CNRS\\
7 rue Ren\'e-Descartes\\
 67084 Strasbourg\\
France}
\email{guoniu.han@unistra.fr}

\begin{document}
\begin{abstract}
A multiset hook length formula for integer partitions
is established by using combinatorial manipulation. 
As special cases,
we rederive three hook length formulas,
two of them	 obtained by  Nekrasov-Okounkov,
the third one by Iqbal, Nazir, Raza and Saleem,
who have made use of the cyclic symmetry of the topological vertex.
A multiset hook-content formula is also proved.
\end{abstract}

\maketitle

\section{Introduction} 

Recently, an elementary proof of the 
Nekrasov-Okounkov hook length formula \cite{NO} was given by the second author in \cite{HanNO},  using the Macdonald identities for $A_t$ (see \cite{macdonalddedekind}).
A crucial step of that proof is the construction of a 
bijection between
$t$-cores and integer vectors satisfying some additional properties.
Several further papers related to the Nekrasov-Okounkov formula have been 
published. See, \emph{e.g.},  \cite{Westbury,CLPS,CW,CKW, IKS, StanleyHook,Ols,INRS}.

In the present paper we again take up the study of the Nekrasov-Okounkov formula
and obtain several results in the following directions:
(1) The bijection between $t$-cores and integer vectors is constructed
for any positive integer~$t$, while in \cite{HanNO}, $t$ had to be an odd positive integer;
(2) That bijection is shown to satisfy a multiset hook length formula
(Theorem \ref{thm.vcoding})
with a functional parameter $\tau$ by using a geometric model,
		 called ``exploded tableau''. The result in \cite{HanNO} corresponds to the special case $\tau(x)=x$.
(3) A multiset hook length formula provides another special case when
taking $\tau=\sin$, namely Theorem \ref{thm.sin.Ht}. 
(4) Three hook length formulas are derived (Corollaries \ref{thm.NO} and \ref{thm.sin}, Theorem~\ref{thm.sin.H1}), the first two
 previously obtained by  Nekrasov-Okounkov \cite{NO},
the third one by Iqbal, Nazir, Raza and Saleem \cite{INRS}.
(5) Theorem \ref{thm.sin.Ht} provides a unified formula for 
the Nekrasov-Okounkov formula and the classical Jacobi triple product identity
(\cite[p.21]{Andrews}, \cite[p.20]{Knuth3}). This solves Problem~6.4 in 
 \cite{HanConj}.
(6) A multiset hook-content hook length formula is also given in Section \ref{sec.content}.

The basic notions needed here can be found in \cite[p.1]{macdonald},
\cite[p.287]{stanley}, \cite[p.1]{lascoux}, \cite[p.59]{Knuth3},
and \cite[p.1]{Andrews}).
A {\it partition}~$\lambda$ of {\it size} $n$ and of {\it length} $\ell$
is a sequence of positive 
integers $\lambda=(\lambda_1,\cdots, \lambda_\ell)$ such that 
$\lambda_1\geq \lambda_2 \geq \cdots \geq \lambda_\ell>0$ and
$n=\lambda_1+ \lambda_2+\cdots+ \lambda_\ell$. 
We write $n=|\lambda|$, $\ell(\lambda)=\ell$ and $\lambda_i=0$ for $i\geq \ell+1$.
The set of all partitions of size $n$
is denoted by $\setP(n)$. 
The set of all partitions is denoted by~$\setP$,
so that $\setP=\bigcup_{n\geq 0} \setP(n)$.
The {\it hook length multiset} of $\lambda$, denoted by $\setH(\lambda)$,
is the multiset of all hook lengths of $\lambda$.
Let $t$ be a positive integer. 
We write
${\setH}_t(\lambda)=\{h\mid h\in\setH(\lambda), h\equiv 0 \pmod t\}$.
A partition $\lambda$ is a {\it $t$-core} if 
$\setH_t(\lambda)=\emptyset$ (see 
\cite[p.69, p.612]{Knuth3}, \cite[p.468]{stanley}).
For example, $\lambda=(6,3,3,2)$ is a partition of size 14 and of length 4.
We have $\setH(\lambda)=\{2,1,4,3,1,5,4,2,9,8,6,\discretionary{}{}{} 3, 2, 1\}$
and $\setH_2(\lambda)=\{2,4,4,2,8,6,2\}$ (see also \cite{HanNO}).

Let $t$ be a positive integer and
$t_0 = 0$ (resp. $t_0=1/2$) if $t$ is odd (resp.~even).
Consider the set of (half-) integer $\ZZp = t_0+\ZZ$. 
Each vector of integers ${\vec V}=
(v_0, v_1, \ldots, v_{t-1})\in \ZZp^t$ is called 
{\it $V_t$-coding} if the following conditions hold:
(i) $\{	v_i-i\mod t : i=0, \cdots,t-1\}$ is equal to 
$t_0+\{0,1,\cdots, t-1\}$, 
(ii) $v_0+v_1+\cdots + v_{t-1}=0$,
(iii) $v_0 > v_1 > \cdots > v_{t-1}$.

\begin{thm}
\label{thm.vcoding} Let $t$ be a positive integer and
$\tau: \ZZ \rightarrow F$  
be any weight function from $\ZZ$ to a field $F$.
Then, 
there is a bijection 
$\phi_t: \lambda\mapsto {\vec V}=(v_0, v_1, \ldots, v_{t-1})$
from $t$-cores onto $V_t$-codings 
such that
\begin{eqnarray}
\label{eqn.vcoding.size}
\label{eqn.sum.square}
|\lambda|=\frac{1}{2t}(v_0^2+v_1^2+\cdots +v_{t-1}^2) - \frac{t^2-1}{24}
\end{eqnarray}
and
\begin{eqnarray}
\label{eqn.vcoding}
\prod_{h \in \setH(\lambda)} {\frac{\tau(h-t) \tau(h+t) }{\tau(h)^2}}
&=& 
\prod_{i=1}^{t-1} {\frac{\tau(-i)^{\beta_i(\lambda)}}{\tau(i)^{\beta_i(\lambda)+t-i}}}
\prod_{0\leq i<j\leq t-1} \tau(v_i-v_j),
\label{eqn.tau.hook}
\end{eqnarray}
where $\beta_i(\lambda) = \#\{\square \in \lambda : h(\square) = t-i\}$.
\end{thm}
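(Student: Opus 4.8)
The plan is to route everything through the classical abacus (first‑column hook length) encoding of partitions, after which \eqref{eqn.tau.hook} turns into a telescoping identity. Attach to $\lambda$ the shifted beta‑set $W(\lambda)=\{\lambda_i-i+(t+1)/2 : i\ge 1\}\subset\ZZp$: it contains every sufficiently small element of $\ZZp$, it determines $\lambda$, and — the fact used throughout — $\setH(\lambda)$ equals, as a multiset, $\{\,x-s : x\in W(\lambda),\ s\in\ZZp\setminus W(\lambda),\ s<x\,\}$. The partition $\lambda$ is a $t$‑core precisely when each residue class of $W(\lambda)$ modulo $t$ is downward closed, i.e.\ of the form $\{v_r-\ell t : \ell\ge 0\}$ for its maximal element $v_r$; I would set $\phi_t(\lambda)=(v_0,v_1,\dots,v_{t-1})$, these $t$ ``runner maxima'' listed in decreasing order. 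That $\phi_t$ is a bijection onto the $V_t$‑codings — conditions (i)--(iii) being exactly the requirement that the $v_r$ are the runner maxima of a balanced $t$‑abacus — and that \eqref{eqn.vcoding.size} holds are routine abacus bookkeeping, so I would dispatch these quickly (the size formula also drops out below from $|\lambda|=\#\setH(\lambda)$) and concentrate on \eqref{eqn.tau.hook}.

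The key structural step is an explicit description of $\setH(\lambda)$ for a $t$‑core. Write a hook as $x-s$ with $x\in W(\lambda)$, $s\notin W(\lambda)$, $s<x$. No hook of a $t$‑core is divisible by $t$, so $x$ and $s$ sit on different runners; if $x$ lies on the runner with maximum $v_i$ then $x=v_i-at$ for some $a\ge 0$, and if $s$ lies on the runner with maximum $v_j$ then $s\notin W(\lambda)$ forces $s=v_j+bt$ for some $b\ge 1$, whence the hook equals $(v_i-v_j)-(a+b)t$; positivity forces $v_i>v_j$, i.e.\ $i<j$. For a fixed pair $i<j$ and fixed $\ell:=a+b\ge 1$ there are exactly $\ell$ choices of $(a,b)$, all giving the same hook $v_i-v_j-\ell t$, which is positive iff $\ell\le d_{ij}:=\lfloor (v_i-v_j)/t\rfloor$. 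Hence, with $r_{ij}:=(v_i-v_j)-d_{ij}t\in\{1,\dots,t-1\}$,
\[
\setH(\lambda)=\biguplus_{0\le i<j\le t-1}\bigl\{\,v_i-v_j-\ell t\ \text{with multiplicity }\ell\ :\ 1\le\ell\le d_{ij}\,\bigr\}.
\]
In particular the boxes of hook length $k$ $(1\le k\le t-1)$ come exactly from the pairs with $r_{ij}=k$ and total count $S_k:=\sum_{i<j,\,r_{ij}=k}d_{ij}$, so $S_k=\beta_{t-k}(\lambda)$.

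Feeding this into the left side of \eqref{eqn.tau.hook}, the factor contributed by a pair $i<j$ is $\prod_{\ell=1}^{d_{ij}}\bigl(g(\ell+1)g(\ell-1)/g(\ell)^2\bigr)^{\ell}$ with $g(\ell)=\tau(v_i-v_j-\ell t)$; a summation by parts (equivalently, induction on $d_{ij}$) collapses it to $\tau(v_i-v_j)\,\tau(r_{ij}-t)^{d_{ij}}\,\tau(r_{ij})^{-(d_{ij}+1)}$, which equals $1$ when $d_{ij}=0$. Collecting the factors by the common value $k=r_{ij}$ gives
\[
\prod_{h\in\setH(\lambda)}\frac{\tau(h-t)\tau(h+t)}{\tau(h)^2}=\Bigl(\prod_{0\le i<j\le t-1}\tau(v_i-v_j)\Bigr)\prod_{k=1}^{t-1}\frac{\tau\bigl(-(t-k)\bigr)^{S_k}}{\tau(k)^{S_k+c_k}},\qquad c_k:=\#\{\,i<j:r_{ij}=k\,\}.
\]
Reindexing $k\mapsto t-k$ in the $\tau(-\,\cdot\,)$ factors and using $S_{t-k}=\beta_k(\lambda)$, a comparison with the right side of \eqref{eqn.tau.hook} reduces the whole theorem to the purely combinatorial identity $S_k+c_k=S_{t-k}+(t-k)$ for $1\le k\le t-1$.

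For this last identity I would argue by a sign count. Let $\sigma_k$ be the permutation of $\{0,\dots,t-1\}$ sending $i$ to the index of the runner whose maximum is $\equiv v_i-k\pmod t$; then a pair $i<j$ has $r_{ij}=k$ exactly when $j=\sigma_k(i)$, so $c_k=\#\{\,i:v_i>v_{\sigma_k(i)}\,\}$. Writing $v_i-v_{\sigma_k(i)}=e_it+k$ with $e_i\in\ZZ$, summation gives $\sum_i e_i=-k$ (since $\sum_i v_i=0$ and $\sigma_k$ permutes), and $e_i<0\iff v_i<v_{\sigma_k(i)}$, so $\#\{\,i:e_i<0\,\}=t-c_k$. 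Moreover $S_k=\sum_i\max(0,e_i)$, while $\sigma_{t-k}=\sigma_k^{-1}$ makes the analogous quantities for $t-k$ form the multiset $\{\,-e_i-1:i\,\}$, so $S_{t-k}=\sum_i\max(0,-e_i-1)$; subtracting term by term, $S_k-S_{t-k}=\sum_i e_i+\#\{\,i:e_i<0\,\}=-k+(t-c_k)$, which is exactly the desired identity. The two genuine obstacles are thus organizing the telescoping cleanly and establishing this sign identity; the abacus translation, \eqref{eqn.vcoding.size}, and the bookkeeping for even $t$ — where $\ZZp=\tfrac12+\ZZ$ but all differences $v_i-v_j$ remain integers, so the residue arguments run verbatim — are routine. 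One should also record that \eqref{eqn.tau.hook} is in truth an identity of monomials in the symbols $\tau(m)$, so the product manipulations above are legitimate even when $\tau$ takes the value $0$.
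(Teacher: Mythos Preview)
Your argument is correct and complete: the explicit parametrization of $\setH(\lambda)$ by runner pairs, the Abel/summation-by-parts telescoping, and the sign identity $S_k+c_k=S_{t-k}+(t-k)$ all check out (in particular the last step, with $e_i=(v_i-v_{\sigma_k(i)}-k)/t$ and $f_i=-e_{\sigma_k^{-1}(i)}-1$, gives exactly $S_k-S_{t-k}=-k+(t-c_k)$).

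However, the route is genuinely different from the paper's. The paper works two-dimensionally, placing boxes at $W_1(\lambda)\times W_2(\lambda)$ (the ``exploded tableau''), so that the Young diagram sits as the region $\Delta=\{x+y>t\}$ with entry $h_\square+t$; the four shifts $\calT_{(0,0)},\calT_{(-t,0)},\calT_{(0,-t)},\calT_{(-t,-t)}$ of $\Delta$ are matched against the strips $\Gamma^\pm$, a folding map $(x,y)\mapsto(-y,-x)$ moves $\Gamma^-\cap(\circW_1\times\circW_2)$ onto $\Gamma^+\cap(C_1\times C_2)$, and a row/column counting argument produces the $\prod_k\tau(k)^{t-k}$ factor. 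Equation~\eqref{eqn.vcoding.size} is then obtained \emph{a posteriori} by specializing $\tau(k)=1+zk^2$ in the already-proved \eqref{eqn.tau.hook}. Your approach is one-dimensional: you stay on a single abacus, write every hook as $(v_i-v_j)-\ell t$ with multiplicity~$\ell$, telescope each pair separately, and reduce the residual bookkeeping to a clean permutation identity on the $\sigma_k$. What you gain is a more elementary and self-contained argument that never needs the conjugate partition or the two-dimensional picture, and which isolates a pleasant standalone fact about the runner permutations. What the paper's approach buys is a visual proof that makes the cancellation geometrically transparent, and the same exploded-tableau machinery is reused later (Section~\ref{sec.content}) to prove the hook--content companion formula.
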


The proof of Theorem \ref{thm.vcoding} is given in Section 
\ref{sec.thm.vcoding}.
With the weight function $\tau=\sin$, 
 an odd function, we get the specialization stated in the next theorem.
Its proof is given in  Section \ref{sec.sin}.

\begin{thm}\label{thm.sin.Ht}
For any positive integer~$r$ and any complex numbers $z,t$, we have
\begin{multline}\label{eqn.sin.Ht}
 \sum_\lambda q^{|\lambda|}\prod_{h\in \setH_r(\lambda)}
	 \left(1-{\frac{ \sin^2(tz) }{ \sin^2(hz) }} \right)  \\
=
\exp\sum_{k=1}^{\infty} \left({\frac{q^k}{k(1-q^k)}}  
-\frac{rq^{rk}}{k(1-q^{rk})}\frac{\sin^2(tkz)}{\sin^2(rkz)}\right).
\end{multline}
\end{thm}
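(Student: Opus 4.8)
The plan is to peel off the $r$-core, reducing matters to the ``$\setH_1=\setH$'' case of the same identity, which we then extract from Theorem~\ref{thm.vcoding}. \textbf{Step 1 (peeling off the $r$-quotient).} Recall the classical bijection $\lambda\leftrightarrow(\lambda^{\ast};\lambda^{(0)},\dots,\lambda^{(r-1)})$ between a partition, its $r$-core, and its $r$-quotient; it satisfies $|\lambda|=|\lambda^{\ast}|+r(|\lambda^{(0)}|+\dots+|\lambda^{(r-1)}|)$ and, crucially, the multiset identity $\setH_r(\lambda)=\{\,rh : h\in\setH(\lambda^{(i)}),\ 0\le i\le r-1\,\}$ counted with multiplicity (see the abacus description in \cite{Knuth3}). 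Using $\sin^{2}A-\sin^{2}B=\sin(A+B)\sin(A-B)$, a hook $h=rh'$ of $\setH_r(\lambda)$ contributes the factor $1-\sin^{2}(tz)/\sin^{2}(rh'z)$; substituting this decomposition and summing independently over $\lambda^{\ast}$ and over each $\lambda^{(i)}$ gives
\begin{equation*}
\sum_{\lambda}q^{|\lambda|}\prod_{h\in\setH_r(\lambda)}\Bigl(1-\frac{\sin^{2}(tz)}{\sin^{2}(hz)}\Bigr)
=\Bigl(\sum_{r\text{-cores }\lambda^{\ast}}q^{|\lambda^{\ast}|}\Bigr)\,S^{r},
\end{equation*}
where $S:=\sum_{\nu}q^{r|\nu|}\prod_{h\in\setH(\nu)}\bigl(1-\sin^{2}(tz)/\sin^{2}(rhz)\bigr)$. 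After the rescaling $q\mapsto q^{r}$, $z\mapsto rz$, $t\mapsto t/r$ the sum $S$ is precisely the left-hand side of the $r=1$ instance of \eqref{eqn.sin.Ht}, so it suffices to prove that instance.

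\textbf{Step 2 (the case $r=1$, via Theorem~\ref{thm.vcoding}).} We must show
\begin{equation*}
\sum_{\lambda}q^{|\lambda|}\prod_{h\in\setH(\lambda)}\Bigl(1-\frac{\sin^{2}(tz)}{\sin^{2}(hz)}\Bigr)
=\exp\sum_{k\ge1}\frac{q^{k}}{k(1-q^{k})}\Bigl(1-\frac{\sin^{2}(tkz)}{\sin^{2}(kz)}\Bigr).
\end{equation*}
Apply Theorem~\ref{thm.vcoding}, whose ``$t$'' we now call $s$ (a positive integer), with the weight $\tau\colon x\mapsto\sin(xz)$. Since $\sin$ is odd, the left-hand side of \eqref{eqn.tau.hook} becomes $\prod_{h}(1-\sin^{2}(sz)/\sin^{2}(hz))$ and its right-hand side becomes, for every $s$-core $\lambda$,
\begin{equation*}
(-1)^{\sum_{i}\beta_i(\lambda)}\Bigl(\prod_{i=1}^{s-1}\sin(iz)^{-(s-i)}\Bigr)\prod_{0\le i<j\le s-1}\sin\bigl((v_i-v_j)z\bigr).
\end{equation*}
Multiply by $q^{|\lambda|}$, sum over $s$-cores, and use \eqref{eqn.sum.square} to replace $q^{|\lambda|}$ by $q^{\frac1{2s}\sum v_i^{2}-(s^{2}-1)/24}$. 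Writing $x_i=e^{\mathrm{i}v_iz}$, the antisymmetric product $\prod_{i<j}\sin((v_i-v_j)z)$ is, up to an explicit monomial, a Vandermonde in the $x_i^{2}$, so the sum over all $V_s$-codings is a theta-type sum over a translate of the $A_{s-1}$ root lattice and is evaluated in closed product form by the Macdonald identity for $\widehat{A}_{s-1}$. This produces, for each $s$, an infinite product for $\sum_{s\text{-core}}q^{|\lambda|}\prod_{h}(1-\sin^{2}(sz)/\sin^{2}(hz))$; letting $s\to\infty$ (a fixed partition is an $s$-core once $s$ exceeds its largest hook) and rewriting the product as the exponential of its logarithmic derivative yields the displayed $r=1$ identity. (Alternatively, one may invoke this as the known trigonometric Nekrasov--Okounkov formula; cf.\ \cite{Westbury,INRS}.)

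\textbf{Step 3 (reassembly).} Feeding the $r=1$ evaluation back through the Step~1 rescaling gives
\begin{equation*}
S=\exp\sum_{k\ge1}\frac{q^{rk}}{k(1-q^{rk})}\Bigl(1-\frac{\sin^{2}(tkz)}{\sin^{2}(rkz)}\Bigr).
\end{equation*}
Combining with the classical $r$-core generating function $\sum_{r\text{-cores }\lambda^{\ast}}q^{|\lambda^{\ast}|}=\prod_{n\ge1}(1-q^{rn})^{r}/(1-q^{n})$, and expanding everything through $\prod_{n}(1-q^{n})^{-1}=\exp\sum_{k}q^{k}/(k(1-q^{k}))$ and $\prod_{n}(1-q^{rn})^{r}=\exp(-r\sum_{k}q^{rk}/(k(1-q^{rk})))$, the factor $\exp(r\sum_{k}q^{rk}/(k(1-q^{rk})))$ arising from $S^{r}$ cancels the matching factor in the $r$-core generating function, and the remaining exponent collapses to exactly the right-hand side of \eqref{eqn.sin.Ht}.

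\textbf{Main obstacle.} Steps 1 and 3 are routine manipulations of generating functions; all the work is in Step 2. The delicate points there are: carrying out the $\widehat{A}_{s-1}$ Macdonald evaluation of the $V_s$-coding sum with the $\sin$-weight while tracking exactly the sign $(-1)^{\sum_i\beta_i(\lambda)}$ and the normalisation $\prod_i\sin(iz)^{-(s-i)}$, and then justifying the limit $s\to\infty$ together with the identification of the limiting infinite product with the stated exponential. This is the trigonometric counterpart of the argument by which Han \cite{HanNO} extracted the Nekrasov--Okounkov formula from the Macdonald identities for $A_t$.
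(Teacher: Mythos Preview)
Your Steps~1 and~3 are correct and essentially coincide with the paper's approach: the $r$-core/quotient bijection is exactly what underlies the Multiplication Theorem (Theorem~\ref{thm.hanji}), and the reassembly of the exponentials is identical.

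The gap is in Step~2. When you apply Theorem~\ref{thm.vcoding} with weight $\tau(x)=\sin(xz)$ and core parameter~$s$, the factor $\tau(h-s)\tau(h+s)/\tau(h)^2$ forces the numerator in the hook product to be $\sin^2(sz)$, not $\sin^2(tz)$ for an independent complex~$t$. So what you obtain (after the Macdonald evaluation, exactly as in Lemma~\ref{thm.t.integer.exp}) is the $r=1$ identity \emph{only at the special values} $t=s$, $s$ an odd positive integer. Your proposed limit $s\to\infty$ then makes no sense: both sides at $t=s$ change with $s$, and $\sin^2(skz)$ has no limit. The remark that every partition is eventually an $s$-core is irrelevant here, because the $s$-core restriction is already vacuous: any non-$s$-core has a hook of length exactly~$s$, so its term carries the factor $1-\sin^2(sz)/\sin^2(sz)=0$ and drops out automatically.

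What is actually needed to pass from odd integers to all complex~$t$ is an interpolation argument, and this is precisely the paper's key step. The paper sets $e^{2itz}$ equal to a new variable (and simultaneously replaces $q$ by $q\cdot e^{2itz}$; see Equation~(\ref{eqn.poly})), observes that the coefficient of $q^n$ on each side becomes a polynomial of degree~$2n$ in that variable, and then invokes Lagrange interpolation: agreement at the infinitely many values coming from odd integers~$s$ forces equality as polynomials, hence for all complex~$t$. Your proposal does not supply this idea, and the alternative of citing \cite{INRS} is circular in the present context, since the $r=1$ formula (Theorem~\ref{thm.sin.H1}) is exactly the result the paper is re-deriving.
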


Some specializations of Equation~(\ref{eqn.sin.Ht}) are given
in Section \ref{sec.specializations}.


\section{Exploded tableau}  

\label{sec.bijection}
With each partition $\lambda = (\lambda_1, \lambda_2,\cdots,\lambda_\ell)$ 
and 
each positive integer~$t$ we associate  several sets of (half-)integers. 
All these concepts will be illustrated for the case $\lambda = (8, 4, 3, 2, 2, 1)$ and $t=5$ 
(See Table~\ref{table5}). 
Note that this case is special, as $\lambda$ is itself a $t$-core, but this property will be assumed most of the time.  

The $W$-\emph{set} of $\lambda$ is a translation of the shifted parts, 
defined to be the set of
all integers of the form $\lambda_i -i + (t+1)/2$ for $i \in \mathbb{N}\setminus 0$ (the partition $\lambda$ is 
viewed as an infinite non-increasing sequence trailing with zeroes). We denote this set by $W(\lambda)$. It is immediate that $W(\lambda) \subset \ZZp$. It is also clear  that there exists a smallest (half-)integral $M= M(\lambda)$ and a largest {(half-)integral} $m=m(\lambda)$ such that $\{m,m-1, \cdots \} \subseteq W(\lambda) \subseteq \{M,M-1,\cdots \}$.  

\begin{table}
\begin{small}
\begin{tabular}{c|c}
$\lambda $ & $  (8, 4, 3, 2, 2, 1)$\\
$W(\lambda)$ & $ \left\{10, 5, 3, 1, 0, -2, -4, -5, -6,-7,-8,-9,-10, -11,\cdots\right\}$\\
$V(\lambda) $ & $ \left\{10, 3, 1, -6, -8\right\}$\\
$\circW(\lambda)  $ & $ \left\{5,  0, -2, -4, -5,  -7, -9, -10, -11,\cdots\right\}$\\
$M(\lambda) $ & $ 10$\\
$m(\lambda)  $ & $ -4$\\
$C(\lambda)  $ & $ \{9, 8, 7, 6, 4, 2, -1, -3\}$\\
\\
\hline
\\
$\lambda^t $ & $ (6, 5, 3, 2, 1, 1, 1, 1)$\\
$W_2(\lambda)$ & $ \left\{8, 6, 3, 1, -1, -2, -3, -4, -6, -7, -8,-9,-10,-11,-12,\cdots\right\}$\\
$V_2(\lambda) $ & $ \left\{8, 6, -1, -3, -10\right\}$\\
$\circW_2(\lambda)  $ & $ \left\{3, 1, -2, -4, -6, -7, -8, -9, -11, -12, \cdots\right\}$\\
$M_2(\lambda) $ & $ 8$\\
$m_2(\lambda)  $ & $ -6$\\
$C_2(\lambda)  $ & $ \{7, 5, 4, 2, 0, -5\}$\\
\end{tabular}
\end{small}
\medskip
\caption{The example $\lambda = (8, 4, 3, 2, 2, 1)$ with $t=5$. Note that this also gives $W_1(\lambda), V_1(\lambda),$ etc since $W(\lambda) = W_1(\lambda),$ etc. \label{table5}}
\end{table}

We say that an element $x$ in a set $X$ is {\it $t$-maximal} if it is the largest in its congruence class modulo $t$. 
If $t$ is even, we have $W(\lambda) \subset \frac{\mathbb{Z}}{2}$. By ``congruence classes mod $t$'', we then mean the congruence classes mod $t$ of $1/2,3/2,\cdots,t-1/2$.
The set of $t$-maximal elements
is denoted by $\tmax(X)$.
In the cases further considered, congruence classes will always contain an element, so no maximum will ever be taken over an empty set. It is then clear that $|\tmax(X)|=t$. 

\medskip

We define the $V$-\emph{set} $V(\lambda)$ of $\lambda$ by 
$V(\lambda) := \tmax(W(\lambda))$. It is easily seen from the definition of $m(\lambda)$ that no congruence class modulo $t$ can be empty. We also set ${\circW} (\lambda) = W(\lambda) \setminus V(\lambda)$. If $V(\lambda)$ is 
sorted by decreasing order,
we get a $V_t$-coding (as proved in Equation~(\ref{eqn.sum.zero})), 
that will be  denoted by  ${\vec V}(\lambda)=\phi_t(\lambda)$.
Thus, the bijection $\phi_t$ required in Theorem \ref{thm.vcoding} is constructed.
\medskip

We also define the complementary set
$C(\lambda) := \{M,M-1, \cdots \} \setminus W(\lambda)$,
so that the disjoint union
$\circW(\lambda) \cup V(\lambda) \cup C(\lambda)$ is equal to  $\{M,M-1, \cdots \}$.
Note that $m(\lambda) = \min C(\lambda) -1$. 

The invariants previously defined, such as 
$V(\lambda), W(\lambda), \ldots$ will also be given the subscript ``1'', as in $V_1(\lambda), W_1(\lambda), \ldots$
The invariants attached to the conjugate partition $\lambda^t$, such as
$V(\lambda^t), W(\lambda^t), \ldots$ will then be written $V_2(\lambda), W_2(\lambda), \ldots$

\medskip

The \emph{exploded diagram} 
of a partition $\lambda$, which we now define, is a basic tool in the construction.
The reader is referred to Figure~\ref{fig.full.exploded} for an example when $t$ is odd, and Figure~\ref{fig.full.exploded.even} when $t$ is even. We start with a two-dimensional lattice $\ZZ'\times \ZZ' \subset \mathbb{R}^2$, and
add a $1\times1$ box in each position  $ (\lambda_i-i+\frac{t+1}{2}, \lambda^t_j-j+\frac{t+1}{2})$ of the lattice for every $i, j \in \mathbb{N}_{>0}$.
This means that there is one box in each element of $W_1(\lambda) \times W_2(\lambda)$. 
In contrast to the classical Ferrers diagram, the exploded diagram is thus infinite.  
The \emph{entry} of each box in the exploded diagram is defined to be the the sum of the two coordinates of the box. 
When the entry is explicitly written on each box, we shall speak of an  \emph{exploded tableau}.

Boxes of constant entry line up on anti-diagonals. We use this fact to group boxes into different sets. 
Let $\Delta$ (resp. $\Gamma^+$, resp. $\Gamma^-$) be the set of all boxes with entries in the range $(t, \infty)$ 
(resp. $(0,t)$, resp. $(-t,0)$).
The set $\Delta$ corresponds to the boxes of $\lambda$ in the classical Ferrers diagram (which are {\it shaded} in Fig.~1). 
In addition, if $(x,y) \in\Delta$ corresponds to $\square \in \lambda$, its entry $x+y$ in the exploded tableau is equal to $h_\square +t$. 
The entries lower than $t$ correspond to outside hooks, and there are thus no box with entry exactly $t$.

\begin{figure}
\centering
\begin{center}
\includegraphics[width=0.9\textwidth]{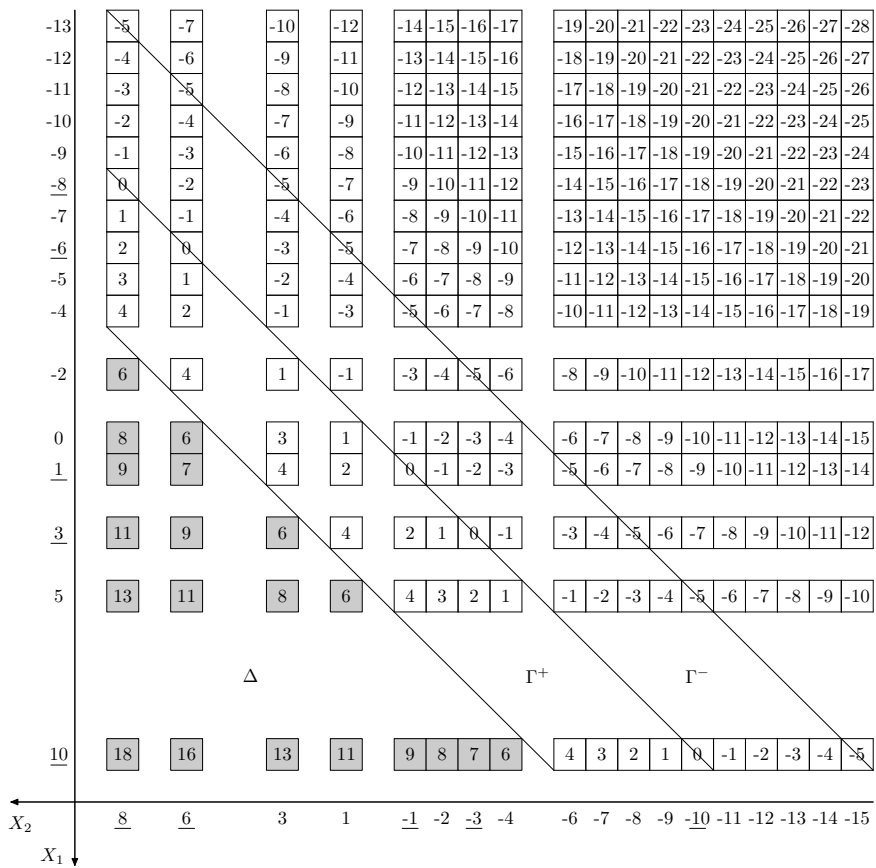}
\end{center}
\caption{The exploded tableau of the partition $\lambda = (8, 4, 3, 2, 2, 1)$, with $t=5$. The partition (shaded boxes)  
appears in an orientation similar to the French orientation for Ferrers diagrams. 
Therefore, axes are reversed and switched. 
The $W$-sets of $\lambda$ and $\lambda^t$ serve as coordinates and  the $V$-sets are underlined. For instance, 
consider the third box of the second row of the classical Ferrers diagram of $\lambda$, \emph{i.e.}~the second-to-last box on that row. This box now ends up at coordinates $( 4-2+\frac{5+1}{2},3-3+\frac{5+1}{2}) = (5,3)$, carrying the entry $5+3=8$.  
Three diagonal lines separate $\Delta, \Gamma^+$ and $\Gamma^-$. \label{fig.full.exploded}} 
\end{figure}

Given a set $X$, we write $-X$ for the set of opposites of elements of $X$. 
In the special case of a $t$-core, many of the invariants we just defined are nicely related.
\begin{lemma}
If $\lambda$ is a $t$-core, then
\begin{eqnarray}
W(\lambda) &=& \bigcup_{a \in V(\lambda)} \left(a+ t\mathbb{N}\right) \label{eqn.decomposition}\\
V_1(\lambda) &=& W_1(\lambda) \cap -W_2(\lambda)  \label{eqn.opposite.cap}\\
V_2(\lambda) &=& -V_1(\lambda) \label{eqn.negative}\\
\circW_1(\lambda) &=& -C_2(\lambda) \cup \left\{-M_2(\lambda)-1, -M_2(\lambda)-2,\cdots\right\}\label{eqn.complementary}\\
\sum_{v \in V(\lambda)} v &=& 0 \label{eqn.sum.zero}.
\end{eqnarray}
\end{lemma}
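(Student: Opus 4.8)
The plan is to derive the five identities from two standard structural facts about the $W$-set, and then to settle the last one, \eqref{eqn.sum.zero}, by a separate counting argument. The first fact I would invoke is the abacus criterion: $\lambda$ is a $t$-core if and only if $x\in W_1(\lambda)$ implies $x-t\in W_1(\lambda)$, since a removable $t$-rim hook of $\lambda$ corresponds precisely to an $x\in W_1(\lambda)$ with $x-t\notin W_1(\lambda)$, and $\setH_t(\lambda)=\emptyset$ exactly when no such $x$ exists. The second is the particle--hole duality for the conjugate partition, which in the present normalization reads $\ZZp\setminus W_2(\lambda)=t-W_1(\lambda)$, equivalently $-W_2(\lambda)=\{x\in\ZZp:\ x+t\notin W_1(\lambda)\}$; this just records that transposition reverses the sign of the Maya diagram and interchanges beads with holes, conjugated by the shift $x\mapsto x+t/2$ relating the Maya diagram to the $W$-set, and one should check that the shift yields the same identity for $t$ odd and $t$ even. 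Throughout I would use that $\lambda^t$ is again a $t$-core (conjugation preserves hook lengths, hence $\setH_t$), and that $V(\lambda)$ consists of exactly $t$ elements lying in pairwise distinct residue classes modulo $t$, as already observed.

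From the first fact, \eqref{eqn.decomposition} is immediate: for $a\in V_1(\lambda)$ the whole chain $a,a-t,a-2t,\dots$ lies in $W_1(\lambda)$, and conversely each $x\in W_1(\lambda)$ is $\le$ the $t$-maximal element $a$ of its residue class and hence is one of $a,a-t,\dots$; the union is disjoint because distinct elements of $V_1(\lambda)$ lie in distinct classes. In particular this gives $V_1(\lambda)=\{x\in W_1(\lambda):\ x+t\notin W_1(\lambda)\}$ and, using the first fact again, $\circW_1(\lambda)=\{x\in W_1(\lambda):\ x+t\in W_1(\lambda)\}=\{x\in\ZZp:\ x+t\in W_1(\lambda)\}$. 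Intersecting the description of $V_1(\lambda)$ with the second fact yields $V_1(\lambda)=W_1(\lambda)\cap(-W_2(\lambda))$, which is \eqref{eqn.opposite.cap}. I would obtain \eqref{eqn.negative} by applying \eqref{eqn.opposite.cap} to $\lambda^t$ (whose conjugate is $\lambda$), giving $V_2(\lambda)=W_2(\lambda)\cap(-W_1(\lambda))$, and comparing with the negative of \eqref{eqn.opposite.cap} for $\lambda$. For \eqref{eqn.complementary} I would use that $W_2(\lambda)\subseteq\{M_2(\lambda),M_2(\lambda)-1,\dots\}$, so the opposite of its right-hand side, namely $C_2(\lambda)\cup\{M_2(\lambda)+1,M_2(\lambda)+2,\dots\}$, is all of $\ZZp\setminus W_2(\lambda)$; hence that right-hand side equals $-(\ZZp\setminus W_2(\lambda))=\ZZp\setminus(-W_2(\lambda))=\{x\in\ZZp:\ x+t\in W_1(\lambda)\}$ by the second fact, which is $\circW_1(\lambda)$ by the formula above.

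For \eqref{eqn.sum.zero} I would argue by comparison with the empty partition. Fix $L\in\ZZp$ negative enough that $\lambda_i-i+\tfrac{t+1}{2}\ge L$ for all $i\le\ell(\lambda)$ and $\tfrac{t+1}{2}-L\ge\ell(\lambda)$. Counting $W_1(\lambda)\cap[L,\infty)$ by the index $i$ gives $\#\bigl(W_1(\lambda)\cap[L,\infty)\bigr)=\#\{i\ge1:\lambda_i-i+\tfrac{t+1}{2}\ge L\}=\tfrac{t+1}{2}-L$, independent of $\lambda$ (hence the same as for $\lambda=\emptyset$, where $V(\emptyset)=\{-\tfrac{t-1}{2},\dots,\tfrac{t-1}{2}\}$ sums to $0$). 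On the other hand, writing $V_1(\lambda)=\{v_0,\dots,v_{t-1}\}$ and $V_1(\emptyset)=\{v^0_0,\dots,v^0_{t-1}\}$ indexed so that $v_a\equiv v^0_a\pmod t$ (possible since both sets hit each residue class of $\ZZp$ exactly once), \eqref{eqn.decomposition} gives $\#\bigl(W_1(\lambda)\cap[L,\infty)\bigr)=\sum_a\bigl(\lfloor(v_a-L)/t\rfloor+1\bigr)$ and likewise for $\emptyset$; subtracting and using $\lfloor(v_a-L)/t\rfloor-\lfloor(v^0_a-L)/t\rfloor=(v_a-v^0_a)/t$ (valid since $v_a-v^0_a\in t\ZZ$) yields $\sum_a(v_a-v^0_a)/t=0$, so $\sum_{v\in V(\lambda)}v=\sum_a v_a=\sum_a v^0_a=0$.

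The deductions of \eqref{eqn.decomposition}--\eqref{eqn.complementary} should be purely formal once the two facts are in place; the part that needs a genuine idea is \eqref{eqn.sum.zero}, where the key point will be that $\#\bigl(W_1(\lambda)\cap[L,\infty)\bigr)$ stabilizes to a $\lambda$-independent value for $L$ very negative --- the finite-displacement input that makes the vanishing of $\sum v_i$ drop out of the comparison with the empty partition. The other thing to handle carefully is the half-integer bookkeeping in the particle--hole duality when $t$ is even, though as noted the final identity is the same in both parities.
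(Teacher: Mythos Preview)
Your argument is correct. For \eqref{eqn.decomposition}--\eqref{eqn.complementary} you follow essentially the same route as the paper: the abacus criterion for $t$-cores gives \eqref{eqn.decomposition}, and what you call particle--hole duality is exactly the ``classical lemma'' the paper invokes (that $\{\lambda_i-i+1/2\}$ and $-\{\lambda^t_j-j+1/2\}$ partition $\ZZ+1/2$), translated by $t/2$. Your derivations of \eqref{eqn.opposite.cap}--\eqref{eqn.complementary} from these two inputs are just a slightly more explicit write-up of the paper's sketch.

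The one genuine difference is \eqref{eqn.sum.zero}. The paper does not prove it in place but cites Garvan for an argument via the Durfee square of $\lambda$. Your approach is different and self-contained: you exploit that $\#\bigl(W_1(\lambda)\cap[L,\infty)\bigr)=\tfrac{t+1}{2}-L$ is independent of $\lambda$ for $L\ll 0$, then use \eqref{eqn.decomposition} to rewrite this count as $\sum_a\bigl(\lfloor(v_a-L)/t\rfloor+1\bigr)$ and compare with the empty partition. This is a clean finite-displacement (``charge zero'') argument and has the advantage of staying entirely within the abacus framework already set up, with no new combinatorial gadget needed; the paper's route via the Durfee square is shorter on the page only because it outsources the work to a reference. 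Either way the content is the same: the Maya diagram of any partition differs from that of $\emptyset$ by finitely many bead moves, each preserving $\sum v_i$.
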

\begin{proof}
Equation~(\ref{eqn.decomposition}) follows from the definitions: if we had  $v  {} \notin {}  W(\lambda)$ with $v+t \in W(\lambda)$,
there would exist 	a hook of length $t$ on the row that generates $v+t$, and $\lambda$ would not be a $t$-core.
It is a classical lemma in combinatorics that for any partition $\lambda$,  the two sets $\left\{\lambda_i -i +1/2 : i \ge 1 \right\}$ and $-\left\{\lambda^t_i -i +1/2 : i \ge 1 \right\}$ are disjoint and that their union is $\mathbb{Z}+1/2$. 
The sets $W_1(\lambda)$ and $-W_2(\lambda)$ are merely translates of these two classical sets.
In light of Equation~(\ref{eqn.decomposition}) the sets
$W_1$ and $-W_2$ intersect in just one point for each congruence class mod~$t$. 
It is easy to show that the set of all those points is actually $V_1(\lambda)$ or $-V_2(\lambda)$ (this is Equations~(\ref{eqn.opposite.cap}) and (\ref{eqn.negative})). 

Equation~(\ref{eqn.complementary}) is a quick consequence of the previous three.
A proof of identity~(\ref{eqn.sum.zero}) by using the
Durfee square of the partition $\lambda$
can be found in \cite{Garvan}.
\end{proof}
\section{Proof of Theorem~\ref{thm.vcoding}} 
\label{sec.thm.vcoding}
Throughout the proof we will use the example of $\lambda =(8,4,3,2,2,1)$ and $t=5$, as illustrated
in Fig.~\ref{fig.simplified.exploded} (for $t$ odd) and Fig.~\ref{fig.full.exploded.even} (for $t$ even).
When $t$ is even, both coordinates are half-integers. The entries are still integral and the argument carries through identically.  
Let $\calT_{(a,b)}: \ZZp \rightarrow \ZZp$ denote the translation defined by $\calT_{(a,b)}(x,y) = (x+a,y+b)$. 
We now need the following easy results:
\begin{eqnarray}
\mathbf{1}_{W_1\times \circW_2} +\mathbf{1}_{\circW_1\times W_2} &=& \mathbf{1}_{W_1\times W_2\setminus V_1\times V_2}+\mathbf{1}_{\circW_1\times \circW_2}
\label{eqn.T1}\\
\calT_{(0,-t)}\left(\Delta\right) &=& (\Delta\cup\Gamma^+) \cap (W_1 \times \circW_2),\label{eqn.T2}\\
\calT_{(-t,0)}\left(\Delta\right) &=& (\Delta\cup\Gamma^+) \cap (\circW_1 \times W_2),\label{eqn.T3}\\
\calT_{(-t,-t)}\left(\Delta\right) &=& (\Delta\cup\Gamma^+\cup\Gamma^-) \cap (\circW_1 \times \circW_2).\label{eqn.T4}
\end{eqnarray}
The first one, where $\mathbf{1}$ is the indicator function, is completely trivial. For the second, assume $\square = (x,y) \in \lambda$, 
so that $x+y = h_\square +t$ and $x \in W_1$, $y\in W_2$. Then, its $(0,-t)$ translate, equal to $(x,y-t)$, has entry $x+y-t$ which is nonnegative. Also, $y-t$ is in $\circW_2$, since $y-t$ is not $t$-maximal. Finally, $y-t \in \circW_2$ is equivalent to $y \in W_2$. The other two identities follow similarly.
\medskip

\begin{proof}[Proof of Theorem \ref{thm.vcoding}]
For any set $B$ of boxes in the exploted tableau let 
$$
||B|| := \prod_{(x,y)\in B} \tau(x+y).
$$
The left-hand side of Equation~(\ref{eqn.vcoding}) can be written 
\begin{eqnarray}
\label{eqn.first.ratio}
\LHS=\frac{||\Delta|| \cdot || \calT_{(-t,-t)}\left(\Delta\right)||} {
 ||	\calT_{(-t,0))}\left(\Delta\right)||\cdot || 
	 \calT_{(0,-t)}\left(\Delta\right) ||}.
\end{eqnarray}
Using relations (\ref{eqn.T1}-\ref{eqn.T4}) we can rewrite Expression~(\ref{eqn.first.ratio}) as  
\begin{eqnarray}
\label{eqn.tau.entry.2}
\LHS=\frac{||\Delta \cap (V_1\times V_2) ||\cdot || \Gamma^- \cap  (\circW_1 \times \circW_2 )||}
{|| \Gamma^+ \setminus (V_1\times V_2)||}.
\end{eqnarray}
This information is summarized graphically in Figure~\ref{fig.simplified.exploded} for our running example 
(the numerator is the product of the entries in squares containing a value, 
 while the denominator is
the product of the entries in circles). At this point the reader is encouraged to consider Figure~\ref{fig.simplified.exploded} to anticipate the next step: we aim to ``fold'' the boxes in the region $\Gamma^+$ and interleave them with boxes in the region $\Gamma^-$. 

\begin{figure}
\begin{center}
\includegraphics[width=0.9\textwidth]{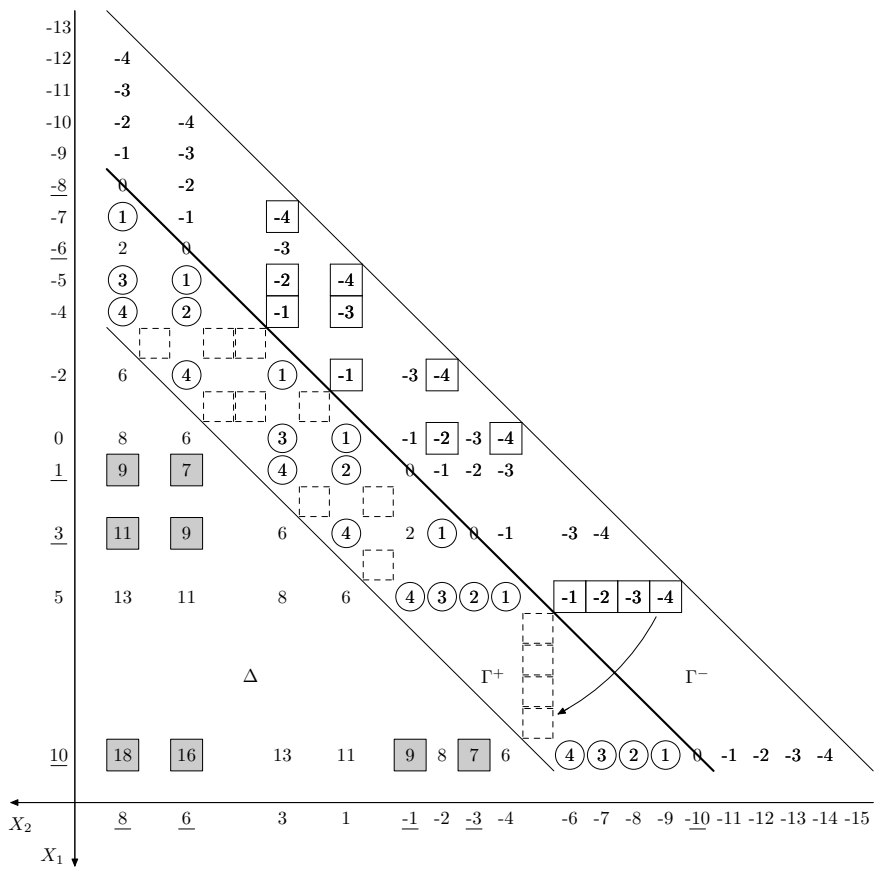}
\end{center}
\caption{The product given in Equation~(\ref{eqn.tau.entry.2}) marked in a graphical way. 
First, $B$ is the set of all boxes (shaded or not) located in $\Delta$ with
	entries in the range $[6,9]$.
	The set $\calT_{(-t,-t)}\left(B\right)$ is materialized by all the squares appearing in $\Gamma^-$.
	The set $\calT_{( -t,0)}\left(B\right)\cup \calT_{(0,-t)}\left(B\right)$ consists of all the circles appearing in $\Gamma^+$.
The middle diagonal indicates the diagonal used to fold the boxes of $\Gamma^-$, while the dashed squares show the locations where those boxes end up. One example is indicated with the arrow.
\label{fig.simplified.exploded}}
\end{figure}

Consider the map $\mathcal{F}: \ZZp\times \ZZp \rightarrow \ZZp\times\ZZp$, sending $(x,y)$ to $(-y,-x)$. 
By Equation~(\ref{eqn.complementary}), this map is a bijection between 
$\Gamma^- \cap  (\circW_1 \times \circW_2)$ and $\Gamma^+ \cap (C_1\times C_2)$. It also merely changes the sign of $x+y$. Hence,
\begin{eqnarray}
\label{eqn.tau.entry.3}
\qquad || \Gamma^- \cap  (\circW_1 \times \circW_2)|| =\prod_{i=1}^{t-1} {\left(\frac{\tau(-i)}{\tau(i)}\right)^{\beta_i(\lambda)}}
||\Gamma^+ \cap (C_1\times C_2)||.
\end{eqnarray}

We now claim that 
\begin{eqnarray}
\frac
{||\Gamma^+ \cap ( W_1 \times (M_2 - \mathbb{N}))||}
{||\Gamma^+\cap (\ZZp \times C_2)||}
 &=&  \prod_{i=1}^{t-1} \tau(i)^{t-i}.
 \label{eqn.triangle}
\end{eqnarray}
The proof of this claim works by observing that in the denominator, the product over all 
boxes in a given column is \emph{always} of the form $\prod_{i=1}^{t-1} \tau(i)$. 
For the numerator, the product of all boxes in a given row is also of the form $\prod_{i=1}^{t-1} \tau(i)$,
except for the highest $t-1$ rows, which end up producing the right-hand side. 
We are left to count the multiplicities of  the full product  $\prod_{i=1}^{t-1} \tau(i)$ in numerator and denominator, and get $ |W_1 \cap [-M_2,\infty)| -t = |V_1\cup -C_2|-t=|C_2|  $
by Equation~(\ref{eqn.complementary}).
Thereby proving Equation~(\ref{eqn.triangle}). Hence
\begin{equation}
\frac {||\Gamma^+ \cap ( W_1 \times W_2)||} {||\Gamma^+\cap (C_1 \times C_2)||}
=
\frac {||\Gamma^+ \cap ( W_1 \times (W_2\cup C_2))||} {||\Gamma^+\cap ((W_1\cup C_1) \times C_2)||}
 =  \prod_{i=1}^{t-1} \tau(i)^{t-i}.
 \label{eqn.triangle.full}
\end{equation}
By Equations (\ref{eqn.tau.entry.2}), (\ref{eqn.tau.entry.3}), (\ref{eqn.triangle.full}) we derive 
\begin{eqnarray*}
\LHS&=&\prod_{i=1}^{t-1} {\left(\frac{\tau(-i)}{\tau(i)}\right)^{\beta_i(\lambda)}}
\frac{||\Delta  \cap V_1\times V_2 ||\cdot||  \Gamma^+ \cap V_1\times V_2|| }{\prod_{i=1}^{t-1} \tau(i)^{t-i}}  \\ 
&=&
\prod_{i=1}^{t-1} {\frac{\tau(-i)^{\beta_i(\lambda)}}{\tau(i)^{\beta_i(\lambda)+t-i}}}
\prod\limits_{\substack{(x,y) \in V_1\times V_2 \\ x+y \ge 1 }} \tau(x+y)   \\
&=&
\prod_{i=1}^{t-1} {\frac{\tau(-i)^{\beta_i(\lambda)}}{\tau(i)^{\beta_i(\lambda)+t-i}}}
\prod\limits_{0\le i<j\le t-1} \tau(v_i-v_j). 
\end{eqnarray*}
The last equality follows from Equation~(\ref{eqn.negative}). This equals the right-hand side of Equation~(\ref{eqn.vcoding}). 

\medskip

We still have to prove Equation~(\ref{eqn.vcoding.size}). For this, we conveniently rely on Equation~(\ref{eqn.vcoding}) with 
the special weight function
$\tau(k) = 1+ z k^2$. 
By considering the coefficient of $z$ on both sides, we get
\begin{multline}
2 |\lambda| t^2 = \sum_{h \in \mathcal{H}(\lambda)} \left((h-t)^2+(h+t)^2-2h^2\right) \overset{\text{Eq.(\ref{eqn.vcoding})}}{=} \\
\left(-\sum_{k=1}^{t-1} k^2(t-k) \right)+ \sum_{0 \le i<j\le t-1} (v_i-v_j)^2 = \\
\left(-\frac{1}{12} t^2 (t^2-1) \right)+ \left( t \sum_{i=0}^{t-1} v_i^2 + \left(\sum_{i=0}^{t-1}v_i\right)^2\right),
\end{multline}
which implies the result, thanks to Equation~(\ref{eqn.sum.zero}).
\end{proof}

When the weight function $\tau$ is either even or odd, the right-hand side of Equation (\ref{eqn.vcoding}) can be simplified.
In the next Corollary we assume that ${\bf u}=(u_0, u_1, \ldots, u_{t-1})$ is the vector obtained by sorting the $V_t$-coding ${\vec V}$  
according to the congruence classes, \emph{i.e.},
$u_i\equiv i+t_0 \pmod t$  for $0\leq i\leq t-1$.

\begin{coroll}
\label{thm.vcoding.evenodd} 
We have
\begin{eqnarray}
\label{eqn.vcoding.evenodd}
\prod_{h \in \setH(\lambda)} {\frac{\tau(h-t) \tau(h+t) }{\tau(h)^2}}
&=& \frac{C}{\prod_{k=1}^{t-1} \tau(k)^{t-k}} \prod_{0\leq i<j\leq t-1} \tau(u_i-u_j),
\label{eqn.tau.hook.evenodd}
\end{eqnarray}
with
$$C =
\left\{
	\begin{array}{ll}
		-1  & \text{if $t \equiv 3 \mod 4$ while $\tau$ is odd,}\\
		1 & \text{otherwise.}
	\end{array}
\right.
$$
\end{coroll}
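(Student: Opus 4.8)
The plan is to start from Equation~(\ref{eqn.vcoding}) and rewrite the two factors on the right-hand side under the hypothesis that $\tau$ is even or odd. First I would replace the sorted-by-value vector ${\vec V}=(v_0,\dots,v_{t-1})$ by the sorted-by-congruence-class vector ${\bf u}=(u_0,\dots,u_{t-1})$. Since ${\bf u}$ is a permutation of ${\vec V}$, say ${\bf u} = \sigma({\vec V})$ for a permutation $\sigma$, the product $\prod_{0\le i<j\le t-1}\tau(v_i-v_j)$ and $\prod_{0\le i<j\le t-1}\tau(u_i-u_j)$ differ only by the $\tau$-values of the inverted pairs: each pair $(i,j)$ with $i<j$ but $u_i < u_j$ (an inversion, relative to the decreasing order of ${\vec V}$) contributes $\tau(u_i-u_j)=\tau(-(u_j-u_i))$ instead of $\tau(u_j-u_i)$. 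If $\tau$ is even these are equal and the two products coincide; if $\tau$ is odd, each such inversion contributes a factor $-1$, so the two products differ by $(-1)^{\mathrm{inv}(\sigma)} = \sgn(\sigma)$. Thus in all cases
\[
\prod_{0\leq i<j\leq t-1}\tau(v_i-v_j) \;=\; \varepsilon(\tau)^{\,\mathrm{inv}(\sigma)}\prod_{0\leq i<j\leq t-1}\tau(u_i-u_j),
\]
where $\varepsilon(\tau)=1$ if $\tau$ is even and $\varepsilon(\tau)=-1$ if $\tau$ is odd.

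Next I would handle the prefactor $\prod_{i=1}^{t-1}\tau(-i)^{\beta_i(\lambda)}/\tau(i)^{\beta_i(\lambda)+t-i}$. If $\tau$ is even then $\tau(-i)=\tau(i)$, so the numerator cancels the $\tau(i)^{\beta_i(\lambda)}$ in the denominator and we are left with $1/\prod_{i=1}^{t-1}\tau(i)^{t-i}$. If $\tau$ is odd, $\tau(-i)=-\tau(i)$, so $\tau(-i)^{\beta_i(\lambda)} = (-1)^{\beta_i(\lambda)}\tau(i)^{\beta_i(\lambda)}$, and again the $\tau(i)$-powers cancel down to $1/\prod_{i=1}^{t-1}\tau(i)^{t-i}$ but leaving an overall sign $(-1)^{\sum_{i=1}^{t-1}\beta_i(\lambda)}$. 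So in both cases the prefactor equals $\delta(\tau,\lambda)/\prod_{i=1}^{t-1}\tau(i)^{t-i}$ with $\delta=1$ for $\tau$ even and $\delta = (-1)^{\sum_i \beta_i(\lambda)}$ for $\tau$ odd. Combining the two steps, the constant is $C = \varepsilon(\tau)^{\mathrm{inv}(\sigma)}\,\delta(\tau,\lambda)$, which for $\tau$ even is $1$ (as claimed), and for $\tau$ odd is $(-1)^{\,\mathrm{inv}(\sigma)+\sum_{i=1}^{t-1}\beta_i(\lambda)}$. It remains to show this last exponent is congruent mod $2$ to $0$ if $t\equiv 0,1,2\bmod 4$ and to $1$ if $t\equiv 3\bmod 4$, \emph{independently of $\lambda$}.

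The main obstacle is exactly this parity computation: proving that $\mathrm{inv}(\sigma) + \sum_{i=1}^{t-1}\beta_i(\lambda)$ is even or odd according to $t\bmod 4$ only. I would approach it via the content/hook bookkeeping already present in the code accompanying the paper (the routine \texttt{secondclaimfirsttheorem}): the quantity $\sum_i\beta_i(\lambda) = \#\{\square\in\lambda : h(\square)<t\}$ counts small hooks, and there is an identity matching, with signs, the multiset $\{|h-t|\}$ against the multiset $\{|v_i-v_j|\}$ coming from the $V$-set. Concretely, for each pair $i<j$ the sign of $v_i-v_j$ versus the "natural" (congruence) order contributes an inversion, and the combinatorial lemma says $\#\{(i,j): v_i-v_j<0 \text{ in congruence order}\} + \#\{\square: h_\square<t\}$ is a fixed function of $t$. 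One then evaluates that fixed function on the empty partition (the trivial $t$-core), where $\sum_i\beta_i=0$ and ${\vec V}$ is the staircase $((t-1)/2, (t-3)/2,\dots,-(t-1)/2)$, so $\mathrm{inv}(\sigma)$ is the number of inversions needed to sort the congruence-ordered staircase $\bigl(t_0, t_0+1,\dots\bigr)\bmod t$ back to decreasing order; a direct count of these inversions for the staircase gives $\binom{\lfloor t/2\rfloor}{2}+\binom{\lceil t/2\rceil}{2}$ or a similar closed form, whose parity is $0,0,1,1$ as $t\equiv 0,1,2,3\bmod 4$. I would verify the base case by hand for $t=2,3,4,5$ to pin down the formula, then invoke the $\lambda$-independence established by the hook/content identity to conclude in general.
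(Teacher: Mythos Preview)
Your reduction in the first two paragraphs is correct and is exactly what the paper does implicitly: when $\tau$ has a definite parity, Equation~(\ref{eqn.vcoding}) collapses to the stated form with $C=(-1)^{\mathrm{inv}(\sigma)+\sum_i\beta_i(\lambda)}$ for $\tau$ odd. The entire content of the corollary is thus the parity claim in your third paragraph.

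The gap is your argument for $\lambda$-independence. The appeal to a ``hook/content identity'' and to the accompanying code is not a proof; the multiset matching you allude to does not by itself control the signs you need, and ``invoke the $\lambda$-independence established by the hook/content identity'' is precisely the step that has to be \emph{proved}. The paper does not split the problem into ``independence'' plus ``base case'' at all: it computes $\sum_i\beta_i(\lambda)=|\Gamma^-|$ directly by partitioning the boxes of $\Gamma^-$ according to the congruence classes mod~$t$ of their two coordinates. Writing $u_i=i+t_0+tk_i$, one gets the exact count
\[
|\Gamma^-|=\sum_{\substack{i,j\\ u_i>u_j}}\bigl(k_i-k_j-\delta_{i<j}\bigr),
\]
and reducing mod~$2$ this becomes $(t-1)\sum_i k_i+\binom{t}{2}+\mathrm{inv}(\sigma)$. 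Since $\sum_i u_i=0$ forces $\sum_i k_i=-\tfrac{t-1}{2}-t_0$, the quantity $\mathrm{inv}(\sigma)+|\Gamma^-|\bmod 2$ is visibly a function of $t$ alone. This single count is the actual work of the proof and simultaneously delivers both the independence and the value; your proposed two-step route would still need exactly this lemma (or an equivalent) to close the first step.

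A minor slip: your stated parity pattern ``$0,0,1,1$ for $t\equiv 0,1,2,3\bmod 4$'' is wrong --- it would give $C=-1$ for $t\equiv 2$, contradicting the statement. Your closed form $\binom{\lfloor t/2\rfloor}{2}+\binom{\lceil t/2\rceil}{2}$ for the empty-partition inversion count is in fact correct and yields the pattern $0,0,0,1$; the error is only in reading off the residues.
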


\begin{proof}
We need to split the boxes in $\Gamma^-$ 
according to the congruence classes for the coordinates of the boxes. 
We know that $W(\lambda) \subset t_0+\mathbb{Z}$. It can be shown that for all $i,j \in  \{0,1,2\cdots,t-1\}$,
\begin{multline*}
\left|\left\{ (x,y) \in \Gamma^-: x \equiv i+t_0\mod t, y  \equiv j+t_0 \mod t \right\}\right| 
\\ = \max\left(0,\left\lfloor \frac{u_i-u_j}{t}\right\rfloor\right) =
 \max(0,k_i-k_j -\delta_{i<j} )
\end{multline*}
if ${\bf u} = (u_i) = (i + t_0+ t\cdot k_i)_{i=0}^{t-1}$. 
Therefore, 
\begin{multline}
\Gamma^- = \left|\left\{ \square \in \lambda : h_\square < t \right\}\right| = \\  \sum_{\substack{i,j\\ u_i > u_j}}  ((k_i-k_j)-\delta_{i<j} )
\equiv \sum_{\substack{i,j\\ u_i>u_j}} ((k_i+k_j)+\delta_{i<j} ) \mod 2 \\
\equiv (t-1) \left(\sum_{i=0}^{t-1} k_i \right) + \frac{t(t-1)}{2} +\sgn \prod_{i<j}(u_i-u_j)\mod 2.
\label{eqn.sgn}
\end{multline}
We know (by Equation~(\ref{eqn.sum.zero})) that
$
0 = \sum_{i=0}^{t-1} u_i = \sum_{i=0}^{t-1} \left(t_0 + i + k_i t \right),
$
which gives $-\sum  k_i = \frac{t-1}{2}+t_0$. 
Together with Equation~(\ref{eqn.sgn}), this easily gives $C=(-1)^{(t_0-{1/2})(t-1)}$, which is merely a restatement of Equation~(\ref{eqn.vcoding.evenodd}).
\end{proof}


{
\renewcommand{\tabcolsep}{2.5pt}
\renewcommand{\arraystretch}{1.4}
\begin{table}[ht]
\begin{small}
\begin{tabular}{c|c}
$\lambda$ &  (8, 5, 4, 1, 1, 1)\\
$W(\lambda)$ & $\left\{\frac{21}{2}, \frac{13}{2}, \frac{9}{2}, \frac{1}{2}, -\frac{1}{2}, -\frac{3}{2}, -\frac{7}{2}, -\frac{9}{2}, -\frac{11}{2},-\frac{13}{2},-\frac{15}{2},-\frac{17}{2},-\frac{19}{2},-\frac{21}{2},
\cdots\right\}$\\
$V(\lambda)$ & $\left\{\frac{21}{2}, \frac{13}{2}, -\frac{1}{2}, -\frac{7}{2}, -\frac{9}{2}, -\frac{17}{2}\right\}$\\
$\circW(\lambda)$  & $\left\{\frac{9}{2}, \frac{1}{2}, -\frac{3}{2}, -\frac{11}{2}, -\frac{13}{2}, -\frac{15}{2}, -\frac{19}{2}, -\frac{21}{2},  \cdots\right\}$\\
$M(\lambda)$ & $\frac{21}{2}$\\
$m(\lambda)$  & $-\frac{7}{2}$\\
$C(\lambda)$  & $\{\frac{19}{2}, \frac{17}{2}, \frac{15}{2}, \frac{11}{2}, \frac{7}{2}, \frac{5}{2}, \frac{3}{2}, -\frac{5}{2}\}$\\
\hline
$\lambda^t$ & $(6, 3, 3, 3, 2, 1, 1, 1)$\\
$W_2(\lambda)$& $\left\{\frac{17}{2}, \frac{9}{2}, \frac{7}{2}, \frac{5}{2}, \frac{1}{2}, -\frac{3}{2}, -\frac{5}{2}, -\frac{7}{2}, -\frac{11}{2}, -\frac{13}{2}, -\frac{15}{2},-\frac{17}{2},-\frac{19}{2},-\frac{21}{2},-\frac{23}{2},
\cdots\right\}$\\
$V_2(\lambda)$ &$ \left\{\frac{17}{2}, \frac{ 9}{2}, \frac{7}{2}, \frac{1}{2}, -\frac{13}{2}, -\frac{21}{2}\right\}$\\
$\circW_2(\lambda) $ &$ \left\{\frac{5}{2}, -\frac{3}{2}, -\frac{5}{2}, 
-\frac{7}{2}, -\frac{11}{2}, -\frac{15}{2}, -\frac{17}{2}, -\frac{19}{2}, 
-\frac{23}{2}, 
\cdots\right\}$\\
$M_2(\lambda)$ &$ \frac{17}{2}$\\
$m_2(\lambda)$  & $-\frac{11}{2}$\\
$C_2(\lambda)$  &$ \{\frac{15}{2}, \frac{13}{2}, \frac{11}{2}, \frac{3}{2}, -\frac{1}{2}, -\frac{9}{2}\}$
\end{tabular}
\end{small}
\medskip
\caption{The example $\lambda = (8,5,4,1,1,1)$ with $t=6$.}
\end{table}
}

\begin{figure}[ht]
\centering
\includegraphics[width=0.9\textwidth]{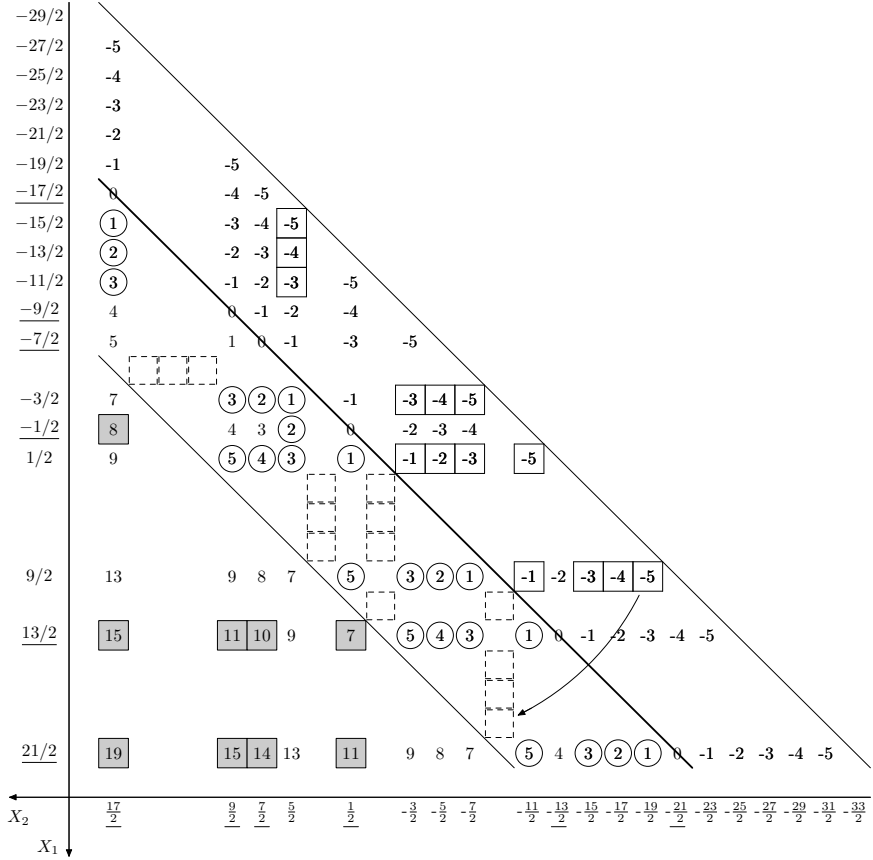}
\caption{Analog of Figure~\ref{fig.simplified.exploded}, but for 
 the partition	$\lambda = (8, 5, 4, 1, 1, 1)$, with $t=6$.}
\label{fig.full.exploded.even}
\end{figure}


\section{Specializations}  
\label{sec.specializations}
We derive some specializations of the multiset hook length formula 
(Theorem \ref{thm.vcoding}).
The simplest non-trivial example is the case where 
the weight function $\tau(x)=x$. Theorem \ref{thm.vcoding}
is then equivalent to Theorem 1.1 in \cite{HanNO}, which provides a combinatorial proof
of a hook length formula due to Nekrasov and Okounkov \cite[formula (6.12)]{NO}
(see also Equation~(\ref{eqn.NO}))
by using the Macdonald identities for 
$A_t$ \cite{macdonalddedekind}.
When we take $\tau=\sin$, which is an odd function,
thanks to some properties of the function $\sin$ (see Lemma \ref{thm.sin.properties}), we derive Theorem \ref{thm.sin.Ht} in Section \ref{sec.sin}.

If  $r$ equals 1, 
we obtain the following hook length formula.
\begin{thm}[$r=1$]\label{thm.sin.H1}
For any complex numbers $z$ and $t$, we have
\begin{equation}\label{eqn.sin.H1}
 \sum_\lambda q^{|\lambda|}\prod_{h\in \setH(\lambda)}
	 \Bigl(1-{\frac{ \sin^2(tz) }{ \sin^2(hz) }} \Bigr)  
=
\exp \sum_{k=1}^{\infty} {\frac{q^k}{k(1-q^k)}}  
\Bigl(1-\frac{\sin^2(tkz)}{\sin^2(kz)}\Bigr).
\end{equation}
\end{thm}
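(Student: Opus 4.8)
The plan is to obtain Theorem~\ref{thm.sin.H1} as the special case $r=1$ of Theorem~\ref{thm.sin.Ht}, whose proof is carried out in Section~\ref{sec.sin}. So no new argument is needed; the only task is to check that the general formula degenerates exactly to~(\ref{eqn.sin.H1}) when $r=1$. Two observations suffice. First, on the left-hand side, $\setH_r(\lambda)$ is by definition the sub-multiset of hook lengths divisible by $r$; when $r=1$ every hook length qualifies, so $\setH_1(\lambda)=\setH(\lambda)$, and $\prod_{h\in\setH_1(\lambda)}\bigl(1-\frac{\sin^2(tz)}{\sin^2(hz)}\bigr)$ is literally the product over the full hook length multiset appearing in~(\ref{eqn.sin.H1}).

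Second, on the right-hand side, putting $r=1$ turns the subtracted term $\frac{rq^{rk}}{k(1-q^{rk})}\frac{\sin^2(tkz)}{\sin^2(rkz)}$ into $\frac{q^k}{k(1-q^k)}\frac{\sin^2(tkz)}{\sin^2(kz)}$, which shares the common factor $\frac{q^k}{k(1-q^k)}$ with the first summand. Factoring this out inside the sum over $k$ gives $\sum_{k\ge1}\frac{q^k}{k(1-q^k)}\bigl(1-\frac{\sin^2(tkz)}{\sin^2(kz)}\bigr)$, which is precisely the exponent in~(\ref{eqn.sin.H1}). Exponentiating yields the claimed identity.

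There is no genuine obstacle here: the statement is a straight specialization, and the step that needs writing down is only the elementary rearrangement of the generating-function exponent just described, together with the set-theoretic remark $\setH_1=\setH$. The one thing worth a sentence of care is that nothing degenerates at $r=1$ — the denominator $\sin^2(kz)$ that appears is the same factor already present in the general statement, and the $q$-expansion is unchanged — so the validity of the identity (as an identity of formal power series in $q$ with coefficients meromorphic in $z,t$) is inherited verbatim from Theorem~\ref{thm.sin.Ht}.
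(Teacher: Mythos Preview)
Your specialization is formally correct --- setting $r=1$ in~(\ref{eqn.sin.Ht}) does yield~(\ref{eqn.sin.H1}) exactly as you describe --- but the argument is circular in this paper. Look at the proof of Theorem~\ref{thm.sin.Ht} in Section~\ref{sec.sin}: it applies the Multiplication Theorem (Theorem~\ref{thm.hanji}) with $\rho(h)=1-\sin^2(tz)/\sin^2(hz)$, and the computation of $f_\alpha(q)$ there explicitly invokes Theorem~\ref{thm.sin.H1}. So the logical flow in the paper is the reverse of what you assume: Theorem~\ref{thm.sin.H1} is established first, and Theorem~\ref{thm.sin.Ht} is then deduced from it.

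The paper's own proof of Theorem~\ref{thm.sin.H1} proceeds as follows. From Corollary~\ref{thm.vcoding.evenodd} with $\tau(k)=\sin(kz)$, together with the Macdonald identity for $A_t$ (Theorem~\ref{thm.macdonald}) and the bijection $\phi_t$, one first obtains Lemma~\ref{thm.t.integer.exp}, which is exactly~(\ref{eqn.sin.H1}) but only for \emph{odd positive integers} $t$ and with the sum on the left restricted to $t$-cores. The key remaining step is a polynomial interpolation argument: after the substitution $s=e^{2itz}$, $q\mapsto qs$, both sides of~(\ref{eqn.sin.H1}) have $q^n$-coefficients that are polynomials of degree $2n$ in $s$; the $t$-core restriction disappears because the hook product vanishes on non-$t$-cores when $s=e^{2itz}$; and agreement at the infinitely many values $s=e^{2itz}$ ($t$ odd) forces agreement as polynomials. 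This is the substantive content you are missing --- the passage from odd integer $t$ to arbitrary complex $t$ is not free, and it is precisely what the paper does before Theorem~\ref{thm.sin.Ht} is available.
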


It can be shown that formula (\ref{eqn.sin.H1}) is equivalent to
the combination of the two identities (2.4) and (2.7) in the
paper written by Iqbal et al. \cite{INRS}. 
Those authors have made use of the cyclic symmetry of the topological vertex \cite{AKMV, ORV}.
When $t=0$ in Theorem \ref{thm.sin.H1}, we obtain 
the  classical generating function for partitions.

\begin{coroll}[$r=1, t=0$]
We have
\begin{equation*}
 \sum_\lambda q^{|\lambda|}
=
\exp\left(\sum_{k=1}^{\infty} {\frac{q^k}{k(1-q^k)}}  
\right)
=\prod_{m=1}^{\infty}{\frac{1}{1-q^m}}.
\end{equation*}
\end{coroll}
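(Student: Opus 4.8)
The plan is to deduce the statement directly from Theorem~\ref{thm.sin.H1} by setting $t=0$, and then to identify the resulting exponential with the classical partition product via a standard logarithmic expansion.

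First I would substitute $t=0$ into Equation~(\ref{eqn.sin.H1}). Since $\sin(tz)=\sin(0)=0$, every factor on the left-hand side becomes $1-\frac{\sin^2(tz)}{\sin^2(hz)}=1$, so for each $\lambda$ the finite product over $h\in\setH(\lambda)$ equals $1$ and the left-hand side collapses to $\sum_\lambda q^{|\lambda|}$. On the right-hand side, likewise $1-\frac{\sin^2(tkz)}{\sin^2(kz)}=1$ for every $k\ge 1$, so the $k$-th summand reduces to $\frac{q^k}{k(1-q^k)}$. All of this is legitimate at the level of formal power series in $q$, since for a fixed power of $q$ only finitely many $\lambda$ and finitely many $k$ contribute, and the $z$-dependence has disappeared entirely. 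This establishes the first equality
\[
\sum_\lambda q^{|\lambda|}=\exp\Bigl(\sum_{k=1}^{\infty}\frac{q^k}{k(1-q^k)}\Bigr).
\]

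For the second equality I would compute the logarithm of the infinite product. Expanding $-\log(1-q^m)=\sum_{j\ge 1}\frac{q^{mj}}{j}$ and summing over $m\ge 1$ gives
\[
\log\prod_{m=1}^{\infty}\frac{1}{1-q^m}=\sum_{m=1}^{\infty}\sum_{j=1}^{\infty}\frac{q^{mj}}{j}=\sum_{j=1}^{\infty}\frac{1}{j}\sum_{m=1}^{\infty}q^{mj}=\sum_{j=1}^{\infty}\frac{q^{j}}{j(1-q^{j})},
\]
where the interchange of summations is harmless since each power of $q$ again receives only finitely many contributions. Renaming $j$ as $k$ and exponentiating yields precisely the middle expression, which completes the proof.

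There is essentially no obstacle here: the whole content is the $t=0$ specialization of Theorem~\ref{thm.sin.H1}, after which $\exp\sum_{k}\frac{q^k}{k(1-q^k)}=\prod_{m}(1-q^m)^{-1}$ is the textbook Euler generating-function identity for $p(n)$. The only point meriting a word of care is that the specialization $t=0$ requires no limiting argument in $z$, because the left-hand side of~(\ref{eqn.sin.H1}) is, for each $\lambda$, a genuine finite product whose value at $t=0$ is visibly $1$.
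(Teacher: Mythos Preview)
Your proof is correct and follows exactly the paper's approach: the paper simply states that this corollary is obtained by setting $t=0$ in Theorem~\ref{thm.sin.H1}, and you have carefully written out that specialization together with the standard logarithmic identity $\exp\sum_{k\ge 1}\frac{q^k}{k(1-q^k)}=\prod_{m\ge 1}(1-q^m)^{-1}$. There is nothing to add.
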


Since
\begin{equation*}
\frac{\sin(az)}{\sin(bz)}=
\frac{a - a^3z^2/6 + \cdots}{b - b^3z^2/6 + \cdots},
\end{equation*}
Equation~(\ref{eqn.sin.H1}) becomes 
\begin{equation*}
 \sum_\lambda q^{|\lambda|}\prod_{h\in \setH(\lambda)}
	 \Bigl(1-\frac{ t^2}{h^2}\Bigr)
=
\exp\Bigl(\sum_k {\frac{q^k}{k(1-q^k)}}  (1-t^2)\Bigr).
\end{equation*}
when $z=0$. 
We also obtain the following hook formula due to Nekrasov and Okounkov \cite[Equation~(6.12)]{NO} (see also \cite{HanNO}):
\begin{coroll}[$r=1, z=0$]
\label{thm.NO}
For any complex number $\beta$ we have
\begin{equation}\label{eqn.NO}
 \sum_\lambda q^{|\lambda|}\prod_{h\in \setH(\lambda)}
	 \Bigl(1-\frac{ \beta}{h^2}\Bigr)
	=
\prod_m (1-q^m)^{\beta-1}.
\end{equation}
\end{coroll}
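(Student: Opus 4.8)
The plan is to obtain Corollary~\ref{thm.NO} as the $z\to 0$ degeneration of Theorem~\ref{thm.sin.H1} in the case $r=1$. Since $t\mapsto t^2$ is surjective on $\mathbb{C}$, it is enough to prove Equation~(\ref{eqn.NO}) with $\beta=t^2$, and the text preceding the corollary has already recorded that the left-hand side of~(\ref{eqn.sin.H1}) degenerates to $\sum_\lambda q^{|\lambda|}\prod_{h\in\setH(\lambda)}(1-t^2/h^2)$ while the exponent degenerates to $(1-t^2)\sum_k q^k/(k(1-q^k))$, using $\lim_{z\to0}\sin^2(tz)/\sin^2(hz)=t^2/h^2$.

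First I would make the limit rigorous by working coefficientwise in $q$: for each $n$, the coefficient of $q^n$ on either side of~(\ref{eqn.sin.H1}) with $r=1$ is a \emph{finite} expression in the quantities $\sin^2(az)/\sin^2(bz)$, each of which has a removable singularity at $z=0$; hence both coefficients are analytic near $z=0$, they agree for small $z\neq0$, and therefore agree at $z=0$. This legitimises passing to the limit $z\to0$ term by term.

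Next I would identify the limiting right-hand side with the infinite product. The key computation is the standard rearrangement
\begin{equation*}
\sum_{k\ge1}\frac{q^k}{k(1-q^k)}=\sum_{k,\ell\ge1}\frac{q^{k\ell}}{k}=-\sum_{\ell\ge1}\log(1-q^\ell),
\end{equation*}
so that $\exp\sum_{k\ge1}q^k/(k(1-q^k))=\prod_m(1-q^m)^{-1}$ as a formal power series in $q$; raising to the power $1-t^2$ produces $\prod_m(1-q^m)^{t^2-1}$, which is~(\ref{eqn.NO}). Alternatively one could avoid Theorem~\ref{thm.sin.H1} altogether and derive the same identity directly by specialising Theorem~\ref{thm.vcoding} at $\tau(x)=x$ and combining with the Macdonald identity for $A_t$, exactly as in~\cite{HanNO}; but the limiting argument is shorter.

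The only genuinely delicate step is the justification of the limit $z\to 0$, i.e.\ the interchange of this limit with the infinite sum over partitions and with the exponential; as indicated, this is not really an obstacle once one passes to coefficients of $q^n$, since each such coefficient is a finite sum and product of functions with removable singularities at the origin. Everything else is a routine power-series manipulation.
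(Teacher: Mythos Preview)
Your argument is correct and follows the same route as the paper: the corollary is obtained by letting $z\to 0$ in Theorem~\ref{thm.sin.H1} (via the Taylor expansion $\sin(az)/\sin(bz)\to a/b$) and then replacing $t^2$ by an arbitrary $\beta$. You are in fact more careful than the paper, which leaves implicit both the coefficientwise justification of the limit and the passage from the exponential form to the product $\prod_m(1-q^m)^{\beta-1}$.
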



Let $e^{2itz}=s$ and $q=qs$  in Theorem \ref{thm.sin.H1}.
Equation~(\ref{eqn.sin.H1}) becomes 
\begin{multline}\label{eqn.poly}
\sum_\lambda q^{|\lambda|}\prod_{h\in \setH(\lambda)}
	 \Bigl(s+{\frac{s^2-2s+1 }{4\sin^2(hz)}} \Bigr)   \cr
\qquad \qquad =
\exp\Bigl(\sum_k {\frac{q^k}{k(1-s^kq^k)}}  
		\bigl(s^k+{\frac{s^{2k}-2s^k+1 }{4\sin^2(kz)}}\bigr)\Bigr).
\end{multline}
Letting $s=0$ yields
\begin{coroll}[$r=1, e^{2itz}=0$]\label{thm.sin} 
We have
\begin{equation}\label{eqn.sin}
 \sum_\lambda q^{|\lambda|}\prod_{h\in \setH(\lambda)}
	 \frac{1 }{ 4\sin^2(hz) } 
=
\exp\Bigl(\sum_k \frac{q^k}{4k\sin^2(kz)}\Bigr).
\end{equation}
\end{coroll}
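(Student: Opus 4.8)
The plan is to derive Corollary \ref{thm.sin} directly from Theorem \ref{thm.sin.H1} by the substitution $e^{2itz}=s$ followed by $s=0$, exactly as the surrounding text indicates through Equation~(\ref{eqn.poly}). First I would record the elementary trigonometric identity $\sin^2(tz) = -\frac{1}{4}(e^{itz}-e^{-itz})^2 = -\frac{1}{4}(s - 2 + s^{-1})$ when $s = e^{2itz}$, so that $1 - \frac{\sin^2(tz)}{\sin^2(hz)}$, after multiplying through by $s$ in the counting variable (the reparametrization $q \mapsto qs$), becomes the bracketed factor $s + \frac{s^2 - 2s + 1}{4\sin^2(hz)}$ appearing in Equation~(\ref{eqn.poly}); the same computation applied with $h$ replaced by $k$ handles the exponential side, and the factor $\frac{q^k}{k(1-q^k)}$ turns into $\frac{q^k}{k(1 - s^k q^k)}$ after the substitution $q \mapsto qs$. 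This is precisely the content of Equation~(\ref{eqn.poly}), which I would treat as already established in the excerpt.

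Next I would set $s = 0$ in Equation~(\ref{eqn.poly}). On the left-hand side, each factor $s + \frac{s^2 - 2s + 1}{4\sin^2(hz)}$ collapses to $\frac{1}{4\sin^2(hz)}$, and the prefactor $q^{|\lambda|}$ is unaffected (the $s$-power bookkeeping having been absorbed into the factors), so the left side becomes $\sum_\lambda q^{|\lambda|} \prod_{h \in \setH(\lambda)} \frac{1}{4\sin^2(hz)}$. On the right-hand side, inside the sum over $k$ the term $\frac{q^k}{k(1 - s^k q^k)}\bigl(s^k + \frac{s^{2k} - 2s^k + 1}{4\sin^2(kz)}\bigr)$ sends $s \to 0$: the denominator $1 - s^k q^k \to 1$, the $s^k$ and $s^{2k}$ terms vanish, leaving $\frac{q^k}{k}\cdot\frac{1}{4\sin^2(kz)} = \frac{q^k}{4k\sin^2(kz)}$. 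Summing over $k \geq 1$ inside the exponential yields $\exp\bigl(\sum_k \frac{q^k}{4k\sin^2(kz)}\bigr)$, which is exactly Equation~(\ref{eqn.sin}).

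The only genuine point requiring care — and hence the main (mild) obstacle — is the legitimacy of setting $s = 0$: one must check that both sides of Equation~(\ref{eqn.poly}), viewed as formal power series in $q$ (with coefficients that are rational functions of $s$ over $\mathbb{C}(z)$, or equivalently Laurent-type expressions in $e^{iz}$), are actually regular at $s = 0$ coefficient-by-coefficient in $q$, so that the specialization is well-defined. On the left this is clear since only finitely many partitions contribute to each power of $q$ and each bracketed factor is a polynomial in $s$. On the right one expands the exponential; each coefficient of $q^n$ is a finite sum of products of the terms $\frac{q^k}{k(1-s^kq^k)}(s^k + \cdots)$, and since $\frac{1}{1 - s^k q^k} = \sum_{m\geq 0} s^{mk} q^{mk}$ has nonnegative powers of $s$, no negative powers of $s$ arise, so the limit $s \to 0$ exists termwise. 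Once this formal-regularity remark is in place, the corollary follows by the substitution and specialization described above. I would also note in passing the obvious sanity check: comparing lowest-order terms in $q$, both sides start with $1 + \frac{q}{4\sin^2 z} + \cdots$, matching the single partition $\lambda = (1)$ with its unique hook of length $1$.
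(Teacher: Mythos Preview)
Your proposal is correct and follows exactly the paper's approach: derive Equation~(\ref{eqn.poly}) from Theorem~\ref{thm.sin.H1} via the substitutions $s=e^{2itz}$ and $q\mapsto qs$, then set $s=0$. You supply more detail than the paper does (in particular the regularity-at-$s=0$ justification, which the paper omits), but the method is identical.
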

Note that Equation~(\ref{eqn.sin}) has the following equivalent form:
\begin{equation*}
 \sum_\lambda q^{|\lambda|}\prod_{h\in \setH(\lambda)}
	 \frac{1 }{ 2-2\cos(hz) } 
=
\exp\Bigl(\sum_k \frac{q^k}{2k(1-\cos(kz))}\Bigr),
\end{equation*}
or, since $\sinh(x)=-i\sin(ix)$, 
\begin{equation}\label{eqn.NO725}
\sum_\lambda q^{|\lambda|}\prod_{h\in \setH(\lambda)}
	 \Bigl(-{\frac{1 }{4\sinh^2(hz)}} \Bigr)  
=
\exp\Bigl(\sum_k {\frac{q^k}{k}}  
		\bigl(-{\frac{1 }{4\sinh^2(kz)}}\bigr)\Bigr).
\end{equation}
Equation~(\ref{eqn.NO725}) and
Equation~(7.25) in \cite{NO} are the same. Minor typos are to be corrected
in the later paper. 

Let $s=-1$ in Equation~(\ref{eqn.poly}). We immediately have
\begin{multline}\label{eqn.poly.s-1}
\sum_\lambda q^{|\lambda|}\prod_{h\in \setH(\lambda)}
	 \Bigl(-1+{\frac{1 }{\sin^2(hz)}} \Bigr)   \cr
\qquad \qquad =
\exp\Bigl(\sum_k {\frac{q^k}{k(1-(-1)^kq^k)}}  
		\bigl((-1)^k+{\frac{2-2(-1)^k }{4\sin^2(kz)}}\bigr)\Bigr).
\end{multline}

\begin{coroll}[$r=1, e^{2itz}=-1$]\label{thm.cot}
We have
\begin{eqnarray*}
 & &\sum_\lambda q^{|\lambda|}\prod_{h\in \setH(\lambda)}
 \cot^2(zh) \cr
& &\qquad\qquad =
\exp\Bigl(\sum_{k\geq 1} {\Bigl(
		\frac{q^{2k-1}  \cot^2((2k-1)z) }{(2k-1)(1+q^{2k-1})}}  
		 + \frac{q^{2k}}{2k(1-q^{2k})}\Bigr)\Bigr).
\end{eqnarray*}
\end{coroll}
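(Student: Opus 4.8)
The plan is to deduce this corollary directly from Equation~(\ref{eqn.poly.s-1}), which is already the $s=-1$ specialization of Equation~(\ref{eqn.poly}); all that remains is a trigonometric rewriting on the left-hand side together with an even/odd splitting of the summation index $k$ on the right-hand side.

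First I would record the elementary identity
\[
\cot^2(\theta)=\frac{\cos^2\theta}{\sin^2\theta}=\frac{1-\sin^2\theta}{\sin^2\theta}=-1+\frac{1}{\sin^2\theta},
\]
so that $\cot^2(zh)=-1+1/\sin^2(zh)$ for every $h\in\setH(\lambda)$. Consequently the product $\prod_{h\in\setH(\lambda)}\cot^2(zh)$ coincides factor by factor with the product $\prod_{h\in\setH(\lambda)}\bigl(-1+1/\sin^2(hz)\bigr)$ occurring on the left of Equation~(\ref{eqn.poly.s-1}), and it suffices to massage the exponential on its right-hand side.

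Next I would split the sum $\sum_{k\ge 1}$ in the exponent of Equation~(\ref{eqn.poly.s-1}) into its even-index and odd-index parts. For $k=2j$ one has $(-1)^k=1$, hence $1-(-1)^kq^k=1-q^{2j}$ and the factor $(-1)^k+\frac{2-2(-1)^k}{4\sin^2(kz)}$ collapses to $1$; the even part of the sum is therefore $\sum_{j\ge 1}\frac{q^{2j}}{2j(1-q^{2j})}$. For $k=2j-1$ one has $(-1)^k=-1$, hence $1-(-1)^kq^k=1+q^{2j-1}$ and
\[
(-1)^k+\frac{2-2(-1)^k}{4\sin^2(kz)}=-1+\frac{4}{4\sin^2((2j-1)z)}=-1+\frac{1}{\sin^2((2j-1)z)}=\cot^2((2j-1)z)
\]
by the identity above; the odd part of the sum is therefore $\sum_{j\ge 1}\frac{q^{2j-1}\cot^2((2j-1)z)}{(2j-1)(1+q^{2j-1})}$. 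Adding the two parts and renaming the index $j$ as $k$ produces exactly the exponent in the statement, which finishes the proof.

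There is essentially no real obstacle here: once Equation~(\ref{eqn.poly.s-1}) is in hand the corollary is pure bookkeeping. The only point worth a remark is the legitimacy of specializing $s=e^{2itz}=-1$, i.e.\ of evaluating Equation~(\ref{eqn.poly}) at a complex parameter value with $tz\in\frac{\pi}{2}+\pi\mathbb{Z}$; but both sides of Equation~(\ref{eqn.poly}) are analytic in $s$ away from the poles of the trigonometric factors and are already stated for arbitrary complex $z$ and $t$, so the substitution is valid — and indeed Equation~(\ref{eqn.poly.s-1}) has already been extracted in the text.
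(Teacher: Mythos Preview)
Your proof is correct and follows exactly the route the paper intends: the corollary is stated immediately after Equation~(\ref{eqn.poly.s-1}) with no further argument, and your trigonometric rewriting on the left together with the even/odd split of $k$ on the right is precisely the ``immediate'' computation the paper leaves implicit.
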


When $r=1$ and $t=2$,  Equation~\ref{eqn.sin.Ht} becomes
the Jacobi triple product identity. 
\begin{coroll}[$r=1, t=2$]\label{thm.jacobi}
We have
\begin{equation}
\prod_{n\geq 0} (1+a x^{n+1}) (1+x^n/a) (1-x^{n+1})
=\sum_{n=-\infty}^{+\infty} a^n x^{n(n+1)/2}.
\end{equation}
\end{coroll}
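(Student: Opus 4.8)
The plan is to set $r=1$ and $t=2$ in Theorem~\ref{thm.sin.H1} and to recognise the two sides of Equation~(\ref{eqn.sin.H1}) as the product side and the theta side of the Jacobi triple product. Write $w=e^{2iz}$, so that $\sin^2(2kz)/\sin^2(kz)=4\cos^2(kz)=2+w^k+w^{-k}$, and hence $1-\sin^2(2kz)/\sin^2(kz)=-(1+w^k+w^{-k})$. Substituting this into the right-hand side of~(\ref{eqn.sin.H1}) and using the elementary identity $\exp\bigl(-\sum_{k\ge1}(yq)^k/(k(1-q^k))\bigr)=\prod_{j\ge1}(1-yq^j)$ with $y=1,w,w^{-1}$ turns the right-hand side into $\prod_{j\ge1}(1-q^j)(1-wq^j)(1-w^{-1}q^j)$.

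For the left-hand side the key point is that the factor attached to a hook of length $h=2$ is $1-\sin^2(2z)/\sin^2(2z)=0$, so only $2$-cores survive in the sum over $\lambda$; these are exactly the staircase partitions $\delta_n=(n,n-1,\dots,2,1)$, of size $|\delta_n|=n(n+1)/2$. A direct computation of the hooks of $\delta_n$ shows that $\setH(\delta_n)$ contains the odd number $2m+1$ with multiplicity $n-m$ for $0\le m\le n-1$. Writing $1-\sin^2(2z)/\sin^2((2m+1)z)=\sin((2m+3)z)\sin((2m-1)z)/\sin^2((2m+1)z)$, the hook product over $\delta_n$ becomes $\prod_{m=0}^{n-1}\bigl(f(m+1)f(m-1)/f(m)^2\bigr)^{n-m}$ with $f(m)=\sin((2m+1)z)$; this telescopes (keeping track of the exponents of each $f(j)$, and of the sign contributed by $f(-1)=-f(0)$) to $(-1)^n\sin((2n+1)z)/\sin z$. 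Using $\sin((2n+1)z)/\sin z=\sum_{l=-n}^{n}w^l$, the left-hand side of~(\ref{eqn.sin.H1}) becomes $\sum_{n\ge0}(-1)^n q^{n(n+1)/2}\sum_{l=-n}^{n}w^l$.

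It then remains to bring the identity $\prod_{j\ge1}(1-q^j)(1-wq^j)(1-w^{-1}q^j)=\sum_{n\ge0}(-1)^n q^{n(n+1)/2}\sum_{l=-n}^{n}w^l$ into the stated normalisation. Writing $\sum_{l=-n}^{n}w^l=(w^{n+1}-w^{-n})/(w-1)$ and reindexing the two resulting geometric pieces by $m=n+1\ge1$ and by $m=-n\le0$ collapses the right-hand side to $\frac{1}{1-w}\sum_{m\in\mathbb Z}(-1)^m q^{m(m-1)/2}w^m$. Multiplying through by $1-w=-w(1-w^{-1})$ absorbs the missing $j=0$ factor into the $w^{-1}$-product; dividing by $-w$ and setting $l=m-1$ gives $\prod_{j\ge1}(1-q^j)(1-wq^j)\prod_{j\ge0}(1-w^{-1}q^j)=\sum_{l\in\mathbb Z}(-1)^l q^{l(l+1)/2}w^l$. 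Finally the substitution $w=-a$, $q=x$ (under which $(-1)^l w^l=a^l$, $\prod_{j\ge1}(1-wq^j)=\prod_{n\ge0}(1+ax^{n+1})$, and $\prod_{j\ge0}(1-w^{-1}q^j)=\prod_{n\ge0}(1+x^n/a)$) yields exactly $\prod_{n\ge0}(1+ax^{n+1})(1+x^n/a)(1-x^{n+1})=\sum_{n\in\mathbb Z}a^n x^{n(n+1)/2}$. All the identities above are to be read as formal power series in $q=x$ (equivalently, for generic $z$), so the rearrangements are legitimate coefficientwise.

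The step I expect to be the real work is the telescoping evaluation of the hook product over the staircase $\delta_n$: one must correctly assemble the hook-length multiplicities and then carefully bookkeep both the exponents of the factors $\sin((2m+1)z)$ and the sign produced by $\sin(-z)=-\sin z$ at the boundary index $m=0$. Everything else — the exponential-to-product identity on the right and the reindexing of the theta sum on the left — is routine.
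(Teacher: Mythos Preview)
Your argument is correct and follows exactly the route the paper intends: the paper does not spell out a proof of Corollary~\ref{thm.jacobi} beyond declaring it the specialisation $r=1$, $t=2$ of Theorem~\ref{thm.sin.H1}, and you have supplied precisely that computation. Your evaluation of the right-hand side via $\exp\bigl(-\sum_k y^kq^k/(k(1-q^k))\bigr)=\prod_{j\ge1}(1-yq^j)$, your identification of the surviving terms on the left as the $2$-cores $\delta_n$, the telescoping of the hook product to $(-1)^n\sin((2n+1)z)/\sin z$, and the final reindexing to reach the standard Jacobi form are all accurate (the exponent bookkeeping in the telescoping step checks out, including the boundary sign from $f(-1)=-f(0)$).
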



\section{Proof of Theorem \ref{thm.sin.Ht}} 
\label{sec.sin}
We use a  Macdonald identity for the proof of our theorem. 
Let $t$ be a positive integer.
Milne \cite{milne} and Le{\u\i}benzon\cite{leibenzon} provide the following version of 
Macdonald's identity \cite{macdonalddedekind} for the type $A_t$: 
let
$\MM_t=\{{\bf a}=(a_1, a_2, \ldots, a_{t})\in \ZZ^t  \mid 
a_1+a_2+\cdots a_{t} = 1 + 2 + \cdots + t\}$. For  $a\in \ZZ$ 
denote the residue of $a$ modulo $t$ by $\res_t a\in \ZZ/t\ZZ$.
For each sequence $(b_1, b_2, \ldots, b_t)$ of residues modulo $t$ 
define the number $\epsilon(b_1, b_2, \ldots, b_t)$ to be
equal to $0$ or $\pm 1$ according to the following rules:
if all of the $b_i$'s are different, \emph{i.e.}~$(b_1, b_2, \ldots, b_t)$
is a permutation of the sequence $(\res_t 1, \res_t 2, \ldots, \res_t t)$,
 then $\epsilon(b_1, b_2, \ldots, b_t)$ is the sign of the permutation;
otherwise, let $\epsilon(b_1, b_2, \ldots, b_t)=0$.
For each $ {\bf a}=(a_1, a_2, \ldots, a_{t})\in \ZZ^t$
let
$\epsilon({\bf a})= \epsilon(\res_t a_1, \res_t a_2, \ldots, \res_t a_t)$
and
$$\Omega({\bf a})=
\frac{1}{2t}\left(a_1^2+a_2^2+\cdots + a_t^2 -1^2 -2^2 -\cdots -t^2\right).$$ 
The Macdonald identity is then rewritten in the following form.
\begin{thm}\label{thm.macdonald}
For every $t\geq 2$ the identity
\begin{multline}\label{eqn.macdonald}
\prod_{m\geq 1} \Bigl((1-q^m)^{t-1}
\prod_{1\leq j<i\leq t} 
\Bigl(1-{\frac{x_i}{x_j}} q^{m-1}\Bigr)
\Bigl(1-{\frac{x_j}{x_i}} q^{m}\Bigr) \Bigr)\cr
=\sum_{{\bf a}\in \MM_t} \epsilon({\bf a}) q^{\Omega({\bf a})} x_1^{1-a_1}\cdots 
x_t^{t-a_t} 
\end{multline}
holds in the ring of formal power series in $q$ with coefficients from
the ring of Laurent polynomials in $x_1, x_2, \ldots, x_t$.
\end{thm}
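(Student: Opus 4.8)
The plan is to recognize \eqref{eqn.macdonald} as the Weyl--Kac denominator identity for the affine root system of type $A_{t-1}^{(1)}$, transcribed into the variables $q,x_1,\dots,x_t$, and then to verify that the three pieces of combinatorial data on the right-hand side --- the sign $\epsilon(\mathbf a)$, the exponent $\Omega(\mathbf a)$, and the lattice $\MM_t$ --- are exactly the images of the corresponding representation-theoretic data (signs of affine Weyl group elements, $\delta$-coefficients, and the root lattice).

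First I would fix the dictionary $q\leftrightarrow e^{-\delta}$, $x_i\leftrightarrow e^{\varepsilon_i}$, under which the positive roots of $A_{t-1}^{(1)}$ become: the imaginary roots $m\delta$ ($m\ge 1$), each of multiplicity $t-1$; the real roots $(\varepsilon_j-\varepsilon_i)+m\delta$ for $j<i$, $m\ge 0$; and the real roots $(\varepsilon_i-\varepsilon_j)+m\delta$ for $j<i$, $m\ge 1$ --- all real roots having multiplicity $1$. Reading the Weyl denominator $\prod_{\alpha>0}(1-e^{-\alpha})^{\operatorname{mult}\alpha}$ through this dictionary reproduces the left-hand side of \eqref{eqn.macdonald} verbatim: the factor $(1-q^m)^{t-1}$ is the contribution of the $m\delta$, and the two products over $j<i$ are the contributions of the two families of real roots.

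Next I would expand the Weyl numerator $\sum_{w\in\widehat W}\sgn(w)\,e^{\,w\widehat\rho-\widehat\rho}$ using the presentation $\widehat W\cong S_t\ltimes Q$, with $Q$ the $A_{t-1}$ root lattice realized inside $\{\mathbf y\in\ZZ^t:\sum y_i=0\}$. Writing a general element as $w=\sigma\,t_\beta$ and applying the standard formula for a level-$t$ translation ($t=h^\vee$), one finds that $w\widehat\rho-\widehat\rho$ has $x_i$-exponent $i-a_i$, where $\mathbf a$ is the tuple $(1-t\beta_1,\dots,t-t\beta_t)$ permuted by $\sigma$; hence $\mathbf a\in\MM_t$ (because $\sum\beta_i=0$), it has pairwise distinct residues mod $t$, and its $\delta$-coefficient works out to $\Omega(\mathbf a)$ (the computation $\langle\rho,\beta\rangle+\tfrac{t}{2}|\beta|^2$, using $\langle\rho,\beta\rangle=-\sum_i i\beta_i$). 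Moreover $\sgn(w)$ equals the sign of the permutation carrying $(\res_t 1,\dots,\res_t t)$ to $(\res_t a_1,\dots,\res_t a_t)$, i.e.\ $\epsilon(\mathbf a)$, the translation $t_\beta$ contributing a trivial sign since its length $\sum_{i<j}|\beta_i-\beta_j|\equiv-2\sum_i i\beta_i$ is even. Conversely every $\mathbf a\in\MM_t$ with distinct residues arises exactly once. Invoking the Weyl--Kac denominator identity then finishes the proof; the remaining lattice points of $\MM_t$, those with a repeated residue, carry $\epsilon(\mathbf a)=0$ and contribute nothing, matching the fact that they do not lie in the $\widehat W$-orbit of the regular weight $\widehat\rho$.

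The step that needs genuine care is this last dictionary check: that $\Omega$ is precisely the right quadratic form under the level-$t$ normalization, and that the vanishing locus of $\epsilon$ coincides with the non-regular lattice points. If one prefers to avoid affine Lie theory altogether, an alternative is induction on $t$: the case $t=2$ is the Jacobi triple product identity, and the inductive step peels off the variable $x_t$ by a theta-function refinement of the Vandermonde/Weyl-denominator expansion $\prod_{i<j}(x_i-x_j)=\det(x_j^{t-i})$ --- where, once again, the delicate point is the bookkeeping of signs together with the zero-sum constraint on the exponent vectors. In either approach the one substantive ingredient is a single classical theorem, and the work of this proof is to show that \eqref{eqn.macdonald} is a faithful restatement of it.
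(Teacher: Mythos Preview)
The paper does not prove this theorem at all: it quotes it as a known result, attributing the particular formulation to Milne and Le{\u\i}benzon and citing Macdonald's original paper for the underlying identity. In the paper the theorem plays the role of an input, used (together with the bijection $\phi_t$ and Theorem~\ref{thm.vcoding}) to derive Lemma~\ref{thm.t.integer} and ultimately Theorem~\ref{thm.sin.Ht}. So there is no ``paper's own proof'' to compare against.

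Your sketch, by contrast, outlines an actual derivation, and the route you choose is the standard modern one: identify \eqref{eqn.macdonald} with the Weyl--Kac denominator identity for $A_{t-1}^{(1)}$ and then check the dictionary. The identification of the left-hand side with the product side of the denominator formula is correct as you wrote it (imaginary roots give $(1-q^m)^{t-1}$; the two families of real roots give the two factors indexed by $j<i$). On the sum side, your parametrization $a_i=\sigma(i)-t\beta_{\sigma(i)}$ (equivalently: $\mathbf a$ is the $\sigma$-permutation of $(1-t\beta_1,\dots,t-t\beta_t)$) does produce exactly the set $\MM_t$ with distinct residues, and the parity computation $\ell(t_\beta)=\sum_{i<j}|\beta_i-\beta_j|\equiv -2\sum_i i\beta_i\pmod 2$ showing $\sgn(t_\beta)=1$ is right. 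The match between the $\delta$-coefficient and $\Omega(\mathbf a)$ is also correct under the level-$h^\vee=t$ normalization, though in a full proof you would want to display that computation rather than assert it. Your alternative inductive scheme via Jacobi triple product is plausible but would be considerably more work to make rigorous than the Weyl--Kac route.

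In short: your approach is sound and is essentially the conceptual proof behind the citations the paper gives; the paper itself simply takes the identity as established.
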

Let $t=2t'+1$ be an odd integer.
The right-hand side of Equation~(\ref{eqn.macdonald}) reads
\begin{eqnarray*}
A(q)&=&\sum_{\substack{{\bf a}\in \MM_t \\ a_i\equiv i-1 \mod t}} \sum_{\sigma\in S_n}
\epsilon(\sigma) q^{\Omega({\bf a})} x_1^{1-\sigma(a_1)}\cdots x_t^{t-\sigma(a_t)} \\
&=& x_1^1 x_2^2 \cdots x_t^t \sum_{\substack{{\bf a}\in \MM_t \\ a_i\equiv i-1 \mod t }} q^{\Omega({\bf a})} \det\left(x_i^{-a_j}\right).
\end{eqnarray*}
Let ${\bf u}=(u_0, u_1, \ldots, u_{t-1})$ be a sequence defined by
$$u_i=a_{i+t'+2} - t'-1,$$ 
where $a_{t+j}=a_j$. Then
$u_i\equiv a_{i+t'+2} - t'-1 \equiv i+t'+2-1-t'-1 \pmod t$
and $\sum_{i=0}^{t-1} u_i=\sum_{i=1}^t -t(t'+1)=t(t-1)/2-t(t'+1)=0$, so that
the sorted vector ${\vec V}$ of ${\bf u}$ 
by decreasing order is a $V_t$-coding. 
Let $\lambda=\phi_t^{-1}({\vec V})$
where $\phi_t$ is defined in Section \ref{sec.bijection}.
Then
\begin{eqnarray}
\label{eqn.proof.size}
\Omega({\bf a})&=&\frac{1}{2t}\sum_{i=1}^t (a_i^2-i^2)\nonumber\\
&=&\frac{1}{2t}\sum_{i=1}^t ((u_{i-1}+t'+1)^2-i^2)\nonumber \\
&=&\frac{1}{2t}\sum_{i=0}^{t-1} u_{i} - \frac{t^2-1}{24} \overset{\text{Eq.(\ref{eqn.sum.square})}}{=} |\lambda|.
\end{eqnarray}
Hence (remember that $t$ is odd)
\begin{eqnarray*}
A(q) &=& x_1^1 x_2^2 \cdots x_t^t \sum_{{\bf u}} 
q^{|\lambda|} \det\left(x_i^{- (u_{j-1}+t'+1)}\right) \\
 &=& x_1^1 x_2^2 \cdots x_t^t (x_1x_2\cdots x_t)^{-t'-1}
 \sum_{{\bf u}} 
q^{|\lambda|} \det\left(x_i^{-u_{j-1}}\right).
\end{eqnarray*}

Let
$$
B(q)=\Bigl(\prod_{m\geq 1} (1-q^m)^{t-1}\Bigr)
\prod_{1\leq j<i\leq t} \prod_{m\geq 1}
\Bigl(1-{\frac{x_i}{x_j}} q^{m}\Bigr)
\Bigl(1-{\frac{x_j}{x_i}} q^{m}\Bigr).
$$
Then
\begin{equation}
B(q)= A(q)/A(0)=
\frac{ \sum_{{\bf u}} q^{|\lambda|} \det\left(x_i^{-u_{j-1}}\right) }{ \det\left(x_i^{-w_{j-1}}\right) },
\end{equation}
where ${\bf w}=(w_0, w_1,\ldots, w_{t-1})=(0,1,2,\ldots, t', -t', \ldots, -2, -1)$.
Let $x_i=e^{-2Iz(i-1)}$, with $I^2 = -1$. We have 
\begin{eqnarray}
 \det\left(x_i^{-u_{j-1}}\right) 
	&=&
 \det\left(e^{2Iz(i-1)u_{j-1}}\right)  \nonumber \\
	&=&\prod_{0\leq i<j\leq t-1} (e^{2Izu_i} - e^{2Izu_j}) \nonumber \\
	&=&(2I)^{n(n-1)/2}\prod_{0\leq i<j\leq t-1} \sin(u_iz - u_jz) 
\end{eqnarray}
and
\begin{eqnarray}
 \det\left(x_i^{-u_{j-1}}\right) 
	&=&(2I)^{n(n-1)/2}\prod_{0\leq i<j\leq t-1} \sin(u_iz - u_jz)  \nonumber\\
	&=&(2I)^{n(n-1)/2}(-1)^{t'}\prod_{k=1}^{t-1} \sin^{t-k}(kz) .
\end{eqnarray}
By the last three equations, we have
\begin{eqnarray*}
& &\prod_{m\geq 1} \Bigl((1-q^m)^{t-1}
\prod_{1\leq j<i\leq t} 
\Bigl(1-{\frac{e^{-2Iz(i-1)}}{e^{-2Iz(j-1)}}} q^{m}\Bigr)
\Bigl(1-{\frac{e^{-2Iz(j-1)}}{e^{-2Iz(i-1)}}} q^{m}\Bigr) \Bigr)\nonumber\\ 
& &=\prod_{m\geq 1} \Bigl((1-q^m)^{t-1}
\prod_{1\leq i \leq t-1} 
(1-e^{-2Iz(t-i)} q^m)^i
(1-e^{-2Iz(-t+i)} q^m)^i\Bigr) \nonumber\\
& &=
\frac{ (-1)^{t'}}
{ \prod_{k=1}^{t-1} \sin^{t-k}(kz) }
 \sum_{{\bf u}} 
q^{|\lambda|}    \prod_{0\leq i<j\leq t-1} \sin(u_iz - u_jz).
\end{eqnarray*}

We now make use of the following easy properties of the  $\sin$ function.
\begin{lemma}\label{thm.sin.properties}
Let $x,y, u_1, u_2, \ldots, u_n$ be complex numbers such that 
$u_1+u_2+\cdots +u_n=0$. Then
\begin{eqnarray}
\sin(x-y)\sin(x+y)=\sin^2(x) -\sin^2(y) ; \\
\prod_{1\leq i<j\leq n}\sin(u_i - u_j)									 
=
\prod_{1\leq i<j\leq n}\frac{e^{2u_iI}- e^{2u_jI}}{2I}.
\end{eqnarray}
\end{lemma}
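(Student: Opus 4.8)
The plan is to prove both identities by direct manipulation, with the convention $\sin\theta = (e^{I\theta}-e^{-I\theta})/(2I)$ where $I^2=-1$; neither part requires anything beyond the angle-addition formula and elementary bookkeeping.

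For the first identity I would expand with $\sin(x\pm y)=\sin x\cos y\pm\cos x\sin y$. Multiplying the two expressions, the cross terms cancel and what remains is $\sin^2 x\cos^2 y-\cos^2 x\sin^2 y$. Substituting $\cos^2 y=1-\sin^2 y$ and $\cos^2 x=1-\sin^2 x$ and simplifying yields $\sin^2 x-\sin^2 y$, as claimed. (Alternatively, the product-to-sum formula gives $\sin(x-y)\sin(x+y)=\tfrac12(\cos 2y-\cos 2x)$, which matches $\tfrac12(1-\cos 2x)-\tfrac12(1-\cos 2y)=\sin^2 x-\sin^2 y$.)

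For the second identity, the key step is the factorization
\[
\frac{e^{2u_iI}-e^{2u_jI}}{2I}=e^{(u_i+u_j)I}\,\frac{e^{(u_i-u_j)I}-e^{-(u_i-u_j)I}}{2I}=e^{(u_i+u_j)I}\sin(u_i-u_j).
\]
Taking the product over all pairs $1\le i<j\le n$ and collecting the exponential prefactors, one obtains $\exp\!\bigl(I\sum_{i<j}(u_i+u_j)\bigr)\prod_{i<j}\sin(u_i-u_j)$. Since each index $k$ occurs in exactly $n-1$ of the unordered pairs, $\sum_{i<j}(u_i+u_j)=(n-1)\sum_{k}u_k$, which vanishes by the hypothesis $u_1+\cdots+u_n=0$; hence the exponential prefactor is $1$ and the two products coincide.

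There is no genuine obstacle: the only point demanding a moment's care is the combinatorial identity $\sum_{i<j}(u_i+u_j)=(n-1)\sum_k u_k$, which is what lets the hypothesis be invoked. Everything else is a one-line computation, and the two statements are logically independent, so they can be proved in either order.
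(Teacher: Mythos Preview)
Your proof is correct. The paper itself does not supply a proof of this lemma: it merely introduces the two identities as ``easy properties of the $\sin$ function'' and moves on, so your argument fills in exactly the elementary verification the authors left to the reader.
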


Taking $\tau(k)=\sin(kz)$ in Equation (\ref{eqn.vcoding.evenodd}) 
and using Lemma \ref{thm.sin.properties} we obtain the following result.

\begin{lemma}\label{thm.t.integer}
For any complex number $p$ and any odd positive integer~$t$, we have
\begin{multline}\label{eqn.t.integer}
\sum_{\lambda\in T(t)} q^{|\lambda|}\prod_{h\in \setH(\lambda)}
	 \Bigl(1-\frac{ \sin^2(tz) }{\sin^2(hz)} \Bigr) \\
	 =
\prod_{m\geq 1} \Bigl((1-q^m)^{t-1}
\prod_{1\leq i \leq t-1} 
(1-e^{-2Iz(t-i)} q^m)^i
(1-e^{2Iz(t-i)} q^m)^i\Bigr),
\end{multline}
where $T(t)$ is the set of all $t$-core partitions.
\end{lemma}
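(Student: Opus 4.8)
The plan is to read off Corollary~\ref{thm.vcoding.evenodd} with the weight function $\tau(k)=\sin(kz)$, rewrite each local factor in the shape appearing in~(\ref{eqn.t.integer}) via the first identity of Lemma~\ref{thm.sin.properties}, then multiply by $q^{|\lambda|}$ and sum over all $t$-cores, and finally recognize the resulting right-hand side as the sum already evaluated by the Macdonald-identity computation carried out earlier in this section. (The case $t=1$ is trivial, both sides being $1$, so we may assume $t\ge 3$ and invoke Theorem~\ref{thm.macdonald}.)

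Concretely, first I would specialize $\tau=\sin(\cdot\,z)$ in~(\ref{eqn.vcoding.evenodd}). Since $\sin$ is odd and $t=2t'+1$ is odd, the constant $C$ of Corollary~\ref{thm.vcoding.evenodd} equals $1$ when $t\equiv 1\pmod 4$ and $-1$ when $t\equiv 3\pmod 4$, and a one-line check shows this is exactly $(-1)^{t'}$. Thus for every $t$-core $\lambda$,
\[
\prod_{h\in\setH(\lambda)}\frac{\sin((h-t)z)\,\sin((h+t)z)}{\sin^2(hz)}
=\frac{(-1)^{t'}}{\prod_{k=1}^{t-1}\sin^{t-k}(kz)}\prod_{0\le i<j\le t-1}\sin\!\bigl((u_i-u_j)z\bigr),
\]
where ${\bf u}=(u_0,\dots,u_{t-1})$ is the $V_t$-coding $\phi_t(\lambda)$ reordered by congruence classes. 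Applying $\sin(x-y)\sin(x+y)=\sin^2 x-\sin^2 y$ (Lemma~\ref{thm.sin.properties}) with $x=hz$, $y=tz$ turns the left-hand factor into $\prod_{h\in\setH(\lambda)}\bigl(1-\sin^2(tz)/\sin^2(hz)\bigr)$, which is precisely the summand on the left of~(\ref{eqn.t.integer}).

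Next I would multiply through by $q^{|\lambda|}$ and sum over $\lambda\in T(t)$. By the bijection $\phi_t$ of Theorem~\ref{thm.vcoding}, running over $t$-cores is the same as running over all admissible vectors ${\bf u}$, and by the size formula~(\ref{eqn.sum.square}) — in the form used in~(\ref{eqn.proof.size}) — the exponent $|\lambda|$ is the one attached to ${\bf u}$. This produces
\[
\sum_{\lambda\in T(t)}q^{|\lambda|}\prod_{h\in\setH(\lambda)}\Bigl(1-\frac{\sin^2(tz)}{\sin^2(hz)}\Bigr)
=\frac{(-1)^{t'}}{\prod_{k=1}^{t-1}\sin^{t-k}(kz)}\sum_{{\bf u}}q^{|\lambda|}\prod_{0\le i<j\le t-1}\sin\!\bigl((u_i-u_j)z\bigr).
\]
But the right-hand side is exactly the quantity computed just before Lemma~\ref{thm.sin.properties}: there, after setting $x_i=e^{-2Iz(i-1)}$ and evaluating the two Vandermonde-type determinants with the second identity of Lemma~\ref{thm.sin.properties}, this sum was shown to equal $\prod_{m\ge1}\bigl((1-q^m)^{t-1}\prod_{1\le i\le t-1}(1-e^{-2Iz(t-i)}q^m)^i(1-e^{-2Iz(-t+i)}q^m)^i\bigr)$, and $e^{-2Iz(-t+i)}=e^{2Iz(t-i)}$. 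Substituting yields~(\ref{eqn.t.integer}).

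The proof is essentially assembly, so the only points needing care are bookkeeping: (i) the identification $C=(-1)^{t'}$ from the two residues of $t$ modulo $4$; (ii) verifying that the index set of $\sum_{\bf u}$ inherited from Corollary~\ref{thm.vcoding.evenodd} (all $V_t$-codings, reordered by residue) literally coincides with the one occurring in the Macdonald computation; and (iii) tracking the factor $\prod_{k=1}^{t-1}\sin^{t-k}(kz)$ and the sign so that they cancel against their counterparts. I do not expect a genuine obstacle here, since all the substance already resides in Theorem~\ref{thm.vcoding}, Corollary~\ref{thm.vcoding.evenodd}, Lemma~\ref{thm.sin.properties}, and Theorem~\ref{thm.macdonald}.
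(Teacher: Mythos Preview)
Your proposal is correct and follows essentially the same route as the paper: specialize Corollary~\ref{thm.vcoding.evenodd} at $\tau(k)=\sin(kz)$, use the first identity of Lemma~\ref{thm.sin.properties} to convert each hook factor into $1-\sin^2(tz)/\sin^2(hz)$, and then match the resulting sum over ${\bf u}$ against the Macdonald-identity computation (with $x_i=e^{-2Iz(i-1)}$) carried out just before. The paper compresses all of this into a single sentence, but your expanded version, including the check $C=(-1)^{t'}$, is exactly the intended argument.
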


We can work with the logarithm of the right-hand side of Equation~(\ref{eqn.t.integer})
to get 
\begin{eqnarray*}
& &\sum_k {\frac{-1}{k}} \sum_{m\geq 1} \bigl( (t-1) q^{mk}+
\sum_{i=1}^{t-1} 
i e^{-2Iz(t-i)k} q^{mk}+
i e^{2Iz(t-i)k} q^{mk}))\bigr) \\
&=&\sum_k {\frac{-q^k}{k(1-q^k)}}  \bigl( (t-1) +
\sum_{i=1}^{t-1} 
(i e^{-2Iz(t-i)k} +
i e^{2Iz(t-i)k} )\bigr) \\
&=&\sum_k {\frac{q^k}{k(1-q^k)}}  
\left(1-{\frac{e^{-2Iztk} + e^{2Iztk} -2 }{e^{-2Izk} + e^{2Izk} -2}}\right).
\end{eqnarray*}
Lemma~\ref{thm.t.integer} becomes the following lemma.
\begin{lemma}\label{thm.t.integer.exp}
For any complex numbers $z$ and any odd positive integer~$t$, we have
\begin{multline}\label{eqn.t.integer.exp}
 \sum_{\lambda\in T(t)} q^{|\lambda|}\prod_{h\in \setH(\lambda)}
	 \Bigl(1-{\frac{ \sin^2(tz) }{ \sin^2(hz) }} \Bigr)  \\
=
\exp\left(\sum_{k=1}^{\infty} {\frac{q^k}{k(1-q^k)}}  
\Bigl(1-\frac{\sin^2(tkz)}{\sin^2(kz)}\Bigr)\right).
\end{multline}
\end{lemma}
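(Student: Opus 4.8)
The plan is to deduce Lemma~\ref{thm.t.integer.exp} directly from Lemma~\ref{thm.t.integer}, which already expresses the left-hand side of~(\ref{eqn.t.integer.exp}) as the infinite product
$$\prod_{m\geq 1} \Bigl((1-q^m)^{t-1}\prod_{1\leq i \leq t-1}(1-e^{-2Iz(t-i)} q^m)^i(1-e^{2Iz(t-i)} q^m)^i\Bigr).$$
So it suffices to check that this product equals the exponential on the right-hand side of~(\ref{eqn.t.integer.exp}); this is exactly the logarithmic computation displayed just before the statement, and the remaining task is only to record why that computation is valid and how its last line is rewritten in terms of $\sin$.

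First I would take the formal logarithm of the product, working in $\mathbb{C}[[q]]$ (with $z$ fixed so that $\sin(kz)\neq 0$ for every $k\geq 1$): each factor has constant term $1$, so one may expand via $\log(1-x) = -\sum_{k\geq1} x^k/k$ and sum the geometric series $\sum_{m\geq1} q^{mk} = q^k/(1-q^k)$. This produces
$$\sum_{k\geq 1} \frac{-q^k}{k(1-q^k)}\Bigl((t-1) + \sum_{i=1}^{t-1} i\bigl(e^{-2Iz(t-i)k}+e^{2Iz(t-i)k}\bigr)\Bigr),$$
so everything reduces to a single algebraic identity in $w = e^{2Izk}$, namely
$$(t-1) + \sum_{i=1}^{t-1} i\,(w^{i-t}+w^{t-i}) = \frac{w^{-t}+w^{t}-2}{w^{-1}+w-2} - 1.$$

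I would prove this identity by squaring the finite geometric sum $\sum_{j=0}^{t-1} w^j = (w^t-1)/(w-1)$: the coefficient of $w^m$ in $\bigl((w^t-1)/(w-1)\bigr)^2$ is $\min(m+1,\,2t-1-m)$ for $0\leq m\leq 2t-2$, and multiplying by $w^{-(t-1)}$ and subtracting $1$ reorganizes those coefficients into precisely the left-hand side above. Then, since $w^{-t}+w^t-2 = -4\sin^2(tkz)$ and $w^{-1}+w-2 = -4\sin^2(kz)$ by the half-angle identity, the right-hand side equals $\sin^2(tkz)/\sin^2(kz) - 1$; substituting back and exponentiating turns Lemma~\ref{thm.t.integer} into the claimed formula.

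The only step that is not pure bookkeeping is the geometric-sum identity, and even that becomes routine once one observes that the factors in Lemma~\ref{thm.t.integer} are organized by the $2t$-th roots of $(x^t-1)^2$; convergence is not an issue because everything is an identity of formal power series in $q$. Thus the anticipated main obstacle is cosmetic rather than conceptual: it is simply keeping the index shifts and signs straight when passing from the product over $m$ and $i$ to the closed form in $\sin$.
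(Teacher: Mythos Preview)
Your proposal is correct and follows essentially the same route as the paper: the paper's proof of Lemma~\ref{thm.t.integer.exp} is precisely the logarithmic computation displayed immediately before its statement, starting from Lemma~\ref{thm.t.integer}, expanding each $\log(1-\cdot)$ as a power series, summing the geometric series in $m$, and simplifying the resulting bracket to $1-\sin^2(tkz)/\sin^2(kz)$. Your only addition is an explicit verification of the algebraic identity in $w=e^{2Izk}$ via squaring the finite geometric sum, which the paper leaves implicit; this is a helpful supplement but not a different method.
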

\medskip

\begin{proof}[Proof of Theorem \ref{thm.sin.H1}]
It is enough to prove that Equation~(\ref{eqn.poly})
is true for any complex numbers $z$ and $s$.
Let $n$ be a positive integer.
The coefficient $C_n(s)$ (resp.~$D_n(s)$) of $q^n$ on the left-hand side 
(resp.~right-hand side) of Equation~(\ref{eqn.poly}) is a polynomial
in $s$ of degree $2n$. 
For the proof of $C_n(s)=D_n(s)$, it suffices to find $2n+1$
explicit numerical values $s_0, s_1, \ldots, s_{2n}$ such that 
$C_n(s_i)= D_n(s_i)$ for $0\leq i\leq 2n$
by using the Lagrange interpolation formula.
The basic fact is that
\begin{equation*}
\prod_{h\in \setH(\lambda)}
	 \Bigl(s+{\frac{s^2-2s+1 }{4\sin^2(hz)}} \Bigr) =0 
\end{equation*}
for every partition $\lambda$ which is not a $t$-core (remember that
		$s=e^{2tz}$).
By comparing Theorem \ref{thm.sin.Ht} and Lemma \ref{thm.t.integer.exp} 
we see that Equation~\ref{eqn.poly}  is true
when $s=e^{2tz}$ for every odd integer~$t$, \emph{i.e.}~$C_n(e^{2tz})=D_n(e^{2tz})$.
This guarantees $C_n(s)=D_n(s)$ for every complex number $s$.  
\end{proof}

Recall the following result obtained in \cite{HanJi}.
\begin{thm}[Multiplication Theorem]
\label{thm.hanji}
If the series $f_\alpha(q)$  and the function $ \rho(h)$ satisfy
the relation
\begin{equation}
\sum_{\lambda \in \setP}q^{|\lambda|} \prod_{h \in \setH(\lambda)}\rho(\alpha
h)=f_\alpha(q),
\end{equation}
then, for any positive integer $r$, the following identity holds:
\begin{equation}
\sum_{\lambda\in\setP} q^{|\lambda|} x^{\#\setH_r(\lambda)}\prod_{h \in
\setH_r(\lambda)}\rho(h) = \left (f_r(xq^r)\right)^r\prod_{k\geq 1} {\frac{
(1-q^{rk} )^r
}{(1-q^k)}}.
\end{equation}
\end{thm}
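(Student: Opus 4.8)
The plan is to reduce the statement to the classical combinatorics of $r$-cores and $r$-quotients, after which everything becomes a formal manipulation of generating functions. Recall that the map sending a partition $\lambda$ to the pair $(\mu,(\nu^{(0)},\ldots,\nu^{(r-1)}))$ consisting of its $r$-core $\mu$ and its $r$-quotient $(\nu^{(0)},\ldots,\nu^{(r-1)})\in\setP^r$ is a bijection from $\setP$ onto $\{r\text{-cores}\}\times\setP^r$, and that $|\lambda|=|\mu|+r(|\nu^{(0)}|+\cdots+|\nu^{(r-1)}|)$. The essential extra input, which one reads off from the beta-set (abacus) description of $\lambda$, is that the sub-multiset $\setH_r(\lambda)$ of hook lengths divisible by $r$ is precisely $\{\,rh : h\in\setH(\nu^{(i)}),\ 0\le i\le r-1\,\}$ as a multiset; in particular $\#\setH_r(\lambda)=|\nu^{(0)}|+\cdots+|\nu^{(r-1)}|$. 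See \cite{macdonald} for this material.

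First I would use this bijection to factor the left-hand side: summing over the $r$-core and then over the $r$-quotient,
\[
\sum_{\lambda\in\setP} q^{|\lambda|} x^{\#\setH_r(\lambda)}\prod_{h\in\setH_r(\lambda)}\rho(h)
=\Bigl(\sum_{\mu\text{ an }r\text{-core}} q^{|\mu|}\Bigr)\prod_{i=0}^{r-1}\Bigl(\sum_{\nu\in\setP}(xq^r)^{|\nu|}\prod_{h\in\setH(\nu)}\rho(rh)\Bigr).
\]
Each of the $r$ identical inner sums is exactly the hypothesis evaluated with $\alpha=r$ and $q$ replaced by $xq^r$, so it equals $f_r(xq^r)$; hence the $r$-quotient contributes $\bigl(f_r(xq^r)\bigr)^r$.

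It then remains to identify $\sum_{\mu\text{ an }r\text{-core}}q^{|\mu|}$. Applying the very same core/quotient bijection to the trivial case $\rho\equiv1$, $x=1$ gives $\prod_{k\ge1}(1-q^k)^{-1}=\bigl(\sum_{\mu\text{ an }r\text{-core}}q^{|\mu|}\bigr)\prod_{k\ge1}(1-q^{rk})^{-r}$, whence $\sum_{\mu\text{ an }r\text{-core}}q^{|\mu|}=\prod_{k\ge1}\frac{(1-q^{rk})^r}{1-q^k}$. Substituting this into the previous display yields the asserted identity.

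The step I expect to be the main obstacle is the careful bookkeeping underlying the first paragraph: checking that the hooks of $\lambda$ divisible by $r$ correspond, \emph{with multiplicity}, to $r$ times the hooks of the $r$ quotient partitions, and simultaneously that $q^{|\lambda|}$ factors as $q^{|\mu|}\prod_i q^{r|\nu^{(i)}|}$. This is classical (James--Kerber, via beta-numbers or the $r$-abacus), but it must be set up precisely, since both the power of $x$ and the product $\prod_h\rho(h)$ on the left-hand side hinge on getting this correspondence exactly right; once it is in hand, the remaining rearrangement of sums and products is routine.
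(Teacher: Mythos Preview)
The paper does not actually prove this theorem: it is quoted verbatim as ``the following result obtained in \cite{HanJi}'' and then used as a black box to pass from Theorem~\ref{thm.sin.H1} to Theorem~\ref{thm.sin.Ht}. So there is no in-paper proof to compare against.

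That said, your argument is correct and is precisely the standard one (and, in fact, the one used in the cited reference, where the $r$-core/$r$-quotient bijection goes under the name ``Littlewood decomposition''). The three ingredients you isolate --- the bijection $\lambda\leftrightarrow(\mu,\nu^{(0)},\ldots,\nu^{(r-1)})$, the size identity $|\lambda|=|\mu|+r\sum_i|\nu^{(i)}|$, and the multiset identity $\setH_r(\lambda)=\{rh:h\in\setH(\nu^{(i)})\}$ --- are exactly what is needed, and your factorization of the generating function and identification of the $r$-core sum are both right. Your own caveat about the ``main obstacle'' is well placed: the multiset statement about hooks divisible by $r$ is the only nontrivial combinatorial fact, and it is cleanly read off the $r$-abacus.
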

This last result can be used as a transition from Theorem~\ref{thm.sin.H1} to Theorem~\ref{thm.sin.Ht}.
\begin{proof}[Proof of Theorem \ref{thm.sin.Ht}]
Let $\rho(h)=1-\sin^2(tz)/\sin^2(hz)$ in Theorem \ref{thm.hanji}. We get
\begin{eqnarray*}
f_\alpha(q)
&=& \sum_{\lambda \in \setP}q^{|\lambda|} 
\prod_{h \in \setH(\lambda)}\rho(\alpha h) \\
&=& \sum_{\lambda \in \setP}q^{|\lambda|} 
\prod_{h \in \setH(\lambda)}\Bigl(1-\frac{\sin^2(tz)}{\sin^2(\alpha hz)}\Bigr)
	\\
&	\overset{\text{Thm \ref{thm.sin.H1}}}{=}&
\exp\Bigl(\sum_{k=1}^{\infty} {\frac{q^k}{k(1-q^k)}}  
\bigl(1-\frac{\sin^2(tkz)}{\sin^2(\alpha kz)}\bigr)\Bigr).
\end{eqnarray*}
Hence,
\begin{eqnarray*}
 & &\sum_\lambda q^{|\lambda|}\prod_{h\in \setH_r(\lambda)}
	 \Bigl(1-{\frac{ \sin^2(tz) }{ \sin^2(hz) }} \Bigr)   \\
	&=& 
\exp\Bigl(r\sum_{k=1}^{\infty} {\frac{q^{rk}}{k(1-q^{rk})}}  
\bigl(1-\frac{\sin^2(tkz)}{\sin^2(r kz)}\bigr)\Bigr) 
	\prod_{k\geq 1} {\frac{ (1-q^{rk} )^r }{(1-q^k)}} \\
&=&
\exp\sum_{k=1}^{\infty} \Bigl({\frac{q^k}{k(1-q^k)}}  
-\frac{rq^{rk}}{k(1-q^{rk})}\frac{\sin^2(tkz)}{\sin^2(rkz)}\Bigr).
\end{eqnarray*}
\end{proof}


\section{Multiset hook-content formula} 
\label{sec.content}
In this section we establish a multiset hook-content formula.
Let $s_\lambda$ be the Schur function corresponding to the partition 
$\lambda$ (see \cite[p.40]{macdonald}, \cite[p.308]{stanley}, 
\cite[p.8]{lascoux}). 
Recall the following classical hook-content formula (\cite[p.374]{stanley},
\cite{robinson}).

\begin{thm}\label{thm.hookcontent.classical}
For any partition $\lambda$ and positive integer $n$ we have
\begin{equation}\label{eqn.hookcontent.classical}
s_\lambda(1,p,p^2,\ldots, p^{n-1})=p^{b(\lambda)} 
\prod_{\square\in\lambda} \frac{1-p^{n+c_\smallsquare}}{ 1-p^{h_\smallsquare}},
\end{equation}
where $b(\lambda)=\sum_i (i-1) \lambda_i$ and 
$c_\smallsquare=j-i$ if $\smallsquare\in\lambda$ occurs on the $i^\text{th}$ row and $j^\text{th}$ column of the diagram of $\lambda$.
\end{thm}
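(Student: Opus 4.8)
The plan is to derive the formula from the bialternant (Weyl character) expression for $s_\lambda$ and then reorganize the resulting product over index pairs into a product over the cells of $\lambda$. Throughout write $\ell_i := \lambda_i + n - i$ for $1 \le i \le n$ (with $\lambda_i = 0$ for $i > \ell(\lambda)$), so that $\ell_1 > \ell_2 > \cdots > \ell_n \ge 0$. First I would use $s_\lambda(x_1,\dots,x_n) = \det(x_i^{\ell_j})_{i,j=1}^n \big/ \det(x_i^{n-j})_{i,j=1}^n$. Specializing $x_i = p^{i-1}$ turns both determinants into Vandermonde determinants --- the numerator in the variables $p^{\ell_1},\dots,p^{\ell_n}$, the denominator in $p^0,\dots,p^{n-1}$ --- which gives
\[
s_\lambda(1,p,\dots,p^{n-1}) = \prod_{1\le i<j\le n}\frac{p^{\ell_i}-p^{\ell_j}}{p^{j-1}-p^{i-1}}.
\]
For $i<j$ one has $\ell_i > \ell_j$ and $j-1 > i-1$, so factoring out the lower power of $p$ from numerator and denominator ($p^{\ell_i}-p^{\ell_j} = p^{\ell_j}(p^{\ell_i-\ell_j}-1)$ and $p^{j-1}-p^{i-1} = p^{i-1}(p^{j-i}-1)$) and cancelling the two minus signs leaves
\[
s_\lambda(1,p,\dots,p^{n-1}) = p^{E}\prod_{1\le i<j\le n}\frac{1-p^{\ell_i-\ell_j}}{1-p^{j-i}}, \qquad E := \sum_{1\le i<j\le n}(\ell_j-i+1),
\]
and a short computation with $\sum j$ and $\sum j^2$ collapses $E$ to $\sum_i (i-1)\lambda_i = b(\lambda)$.

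It then remains to match $\prod_{i<j}(1-p^{\ell_i-\ell_j})/(1-p^{j-i})$ with $\prod_{\square\in\lambda}(1-p^{n+c_\square})/(1-p^{h_\square})$, which I would do through the classical beta-set (abacus) description of hooks and contents. For each row $i$, the beta-numbers lying in $\{0,1,\dots,\ell_i-1\}$ are precisely $\ell_{i+1},\dots,\ell_n$, while subtracting the $\lambda_i$ complementary ``gaps'' from $\ell_i$ gives exactly the hook lengths of the cells in row $i$; hence, as multisets, $\{1,2,\dots,\ell_i\} = \{\ell_i-\ell_j : j>i\} \sqcup \{h_\square : \square \text{ in row } i\}$. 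Converting to products and multiplying over $i$,
\[
\prod_{1\le i<j\le n}(1-p^{\ell_i-\ell_j}) \cdot \prod_{\square\in\lambda}(1-p^{h_\square}) = \prod_{i=1}^{n}\prod_{k=1}^{\ell_i}(1-p^k).
\]
On the content side, the cell in row $i$ and column $j$ (with $1\le j\le\lambda_i$) has $n+c_\square = n-i+j$, which runs over the interval $\{n-i+1,\dots,\ell_i\}$; therefore $\prod_{\square\text{ in row }i}(1-p^{n+c_\square}) = \prod_{k=1}^{\ell_i}(1-p^k)\big/\prod_{k=1}^{n-i}(1-p^k)$, and multiplying over $i$, using $\prod_{i=1}^{n}\prod_{k=1}^{n-i}(1-p^k) = \prod_{1\le i<j\le n}(1-p^{j-i})$,
\[
\prod_{\square\in\lambda}(1-p^{n+c_\square}) = \prod_{i=1}^{n}\prod_{k=1}^{\ell_i}(1-p^k) \Big/ \prod_{1\le i<j\le n}(1-p^{j-i}).
\]
Dividing this last identity by the previous one cancels the common factor $\prod_i\prod_{k=1}^{\ell_i}(1-p^k)$ and yields $\prod_\square(1-p^{n+c_\square})/\prod_\square(1-p^{h_\square}) = \prod_{i<j}(1-p^{\ell_i-\ell_j})/\prod_{i<j}(1-p^{j-i})$, which is precisely the product obtained above; combined with $E = b(\lambda)$ this is Equation~(\ref{eqn.hookcontent.classical}).

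The only step that needs genuine care is the beta-set bookkeeping: identifying the row-$i$ hook lengths with $\{\ell_i-g\}$ over the gaps $g \in \{0,\dots,\ell_i-1\}\setminus\{\ell_1,\dots,\ell_n\}$, and identifying the shifted contents of row $i$ with the interval $\{n-i+1,\dots,\ell_i\}$; once these are in place, the rest is Vandermonde evaluation and elementary power-of-$p$ accounting. A purely combinatorial alternative would be induction on $|\lambda|$ by adding one cell at a time, but the way the hook and content factors telescope under such a move is messier to control, so I would prefer the bialternant route above.
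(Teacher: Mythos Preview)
Your argument is correct and is, in fact, the standard derivation of the hook--content formula: evaluate the bialternant at the geometric progression, collapse both alternants to Vandermonde products, extract the power $p^{b(\lambda)}$, and then convert $\prod_{i<j}(1-p^{\ell_i-\ell_j})/(1-p^{j-i})$ into $\prod_\square (1-p^{n+c_\square})/(1-p^{h_\square})$ via the row-by-row beta-set decomposition $\{1,\dots,\ell_i\}=\{\ell_i-\ell_j:j>i\}\sqcup\{h_\square:\square\text{ in row }i\}$. All the bookkeeping you flag (the exponent $E=b(\lambda)$, the identification of row-$i$ hook lengths with $\ell_i$ minus the gaps, and the identity $\prod_i\prod_{k=1}^{n-i}(1-p^k)=\prod_{i<j}(1-p^{j-i})$) checks out.

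There is nothing to compare your approach against here: the paper does not give its own proof of this theorem. It is introduced with the phrase ``Recall the following classical hook-content formula'' and accompanied only by citations to Stanley and Robinson; the result is used as a known input rather than established anew. Your write-up is essentially the proof one finds in those references, so it is entirely appropriate as a self-contained justification.
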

We now state a Theorem that provides an alternative approach to the left-hand side of Equation~(\ref{eqn.vcoding}). 
\begin{thm}
\label{thm.content} Let $t$ be a positive integer.
There is a bijection $\psi_t: \lambda \mapsto \mu$
which maps $t$-cores
onto the set of all partitions $\mu$ of length at most $t-1$ 
such that $\{	\mu_i-i\mod t : i=1, \ldots,t\}=\{0,1,\ldots, t-1\} $. 
Moreover, given any $\tau$ from $\mathbb{Z}$ to a field $F$, we have
\begin{eqnarray}
\label{eqn.content.size}
|\lambda|  	&=& -|\mu| \frac{|\mu|+t+t^2}{2t^2} +  \sum_{i=1}^t \frac{\mu_i^2 + 2(t+1-i) \mu_i}{2t}
\end{eqnarray}
and
\begin{eqnarray}
\prod_{\square\in\lambda} {\frac{\tau(h_\square-t) \tau(h_\square+t) }{\tau(h_\square)^2}}
&=&
\prod_{i=1}^{t-1} \Bigl({\frac{\tau(-i)}{\tau(i)}}\Bigr)^{\beta_i(\lambda)}
\prod_{\square\in\mu} 
{\frac{ \tau(t+ c_\square) }{\tau(h_\square)
}}.
\label{eqn.content}
\end{eqnarray}
\end{thm}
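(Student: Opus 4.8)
The plan is to realise $\psi_t$ as the composite of the bijection $\phi_t$ of Theorem~\ref{thm.vcoding} with the classical correspondence between a $V_t$-coding and a partition via beta-numbers, and then to transport the two identities \eqref{eqn.sum.square} and \eqref{eqn.vcoding} along that correspondence. Concretely: given a $t$-core $\lambda$, set $\vec V=\phi_t(\lambda)=(v_0>\cdots>v_{t-1})$ and $\mu_j:=v_{j-1}-v_{t-1}-(t-j)$ for $1\le j\le t-1$ (with $\mu_t:=0$). Since the $v_i$ are strictly decreasing members of $t_0+\ZZ$ one gets $\mu_j\ge 0$ and $\mu_j-\mu_{j+1}=(v_{j-1}-v_j)-1\ge 0$, so $\mu$ is a partition of length $\le t-1$; and since the residues of $v_0,\dots,v_{t-1}$ modulo $t$ are pairwise distinct, so are those of $\mu_j-j=v_{j-1}-v_{t-1}-t$, giving $\{\mu_j-j\bmod t:1\le j\le t\}=\{0,\dots,t-1\}$. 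For the inverse I would start from such a $\mu$, note that summing the $t$ congruences gives $|\mu|-\tfrac{t(t+1)}2\equiv 0+1+\cdots+(t-1)\pmod t$, which forces $t\mid|\mu|$; hence the centering constant $c:=-\bigl(|\mu|-\tfrac{t(t+1)}2\bigr)/t$ lies in $\ZZ$ when $t$ is odd and in $\tfrac12+\ZZ$ when $t$ is even, and $v_{j-1}:=(\mu_j-j)+c$ defines a $V_t$-coding (strictly decreasing, one residue per class, zero sum). Then $\lambda:=\phi_t^{-1}(\vec V)$ is the inverse, and a one-line substitution shows the two maps are mutually inverse.

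Next, \eqref{eqn.content.size} should fall out by plugging $v_{j-1}=(\mu_j-j)+c$ into \eqref{eqn.sum.square}: with $S=|\mu|-\tfrac{t(t+1)}2$ one has $\sum_i v_i^2=\sum_j(\mu_j-j)^2-S^2/t$, and expanding $\sum_j(\mu_j-j)^2=\sum_j\mu_j^2-2\sum_j j\mu_j+\tfrac{t(t+1)(2t+1)}6$ and collecting the (routine) polynomial identities in $t$ and $|\mu|$ produces the claimed expression.

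For \eqref{eqn.content} I would introduce the beta-set $B=\{b_1>\cdots>b_t\}$ of $\mu$, $b_j:=\mu_j-j+t\ge 0$. Since $v_i-v_j=b_{i+1}-b_{j+1}$, the Vandermonde factor in \eqref{eqn.vcoding} equals $\prod_{0\le i<j\le t-1}\tau(v_i-v_j)=\prod_{1\le i<j\le t}\tau(b_i-b_j)$. The key step is the bead picture of $\mu$: the hook lengths of $\mu$ are exactly the numbers $b-\gamma$ over pairs $b\in B$, $\gamma\in\ZZ_{\ge 0}\setminus B$ with $\gamma<b$, while the $t$-shifted contents $t+c_\square$ of the cells in row $i$ run through $\{t-i+1,\dots,b_i\}$. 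For fixed $i$ the partition $\{1,\dots,b_i\}=\{b_i-b_j:j>i\}\sqcup\{b_i-\gamma:\gamma\in\ZZ_{\ge 0}\setminus B,\ \gamma<b_i\}$ yields $\prod_{k=1}^{b_i}\tau(k)=\prod_{j>i}\tau(b_i-b_j)\prod_{\gamma\notin B,\,0\le\gamma<b_i}\tau(b_i-\gamma)$; taking the product over $i=1,\dots,t$ and reading off both descriptions gives $\prod_i\prod_{k=1}^{b_i}\tau(k)=\prod_{1\le i<j\le t}\tau(b_i-b_j)\prod_{\square\in\mu}\tau(h_\square)$, while splitting $\prod_{k=1}^{b_i}\tau(k)=\prod_{k=1}^{t-i}\tau(k)\prod_{k=t-i+1}^{b_i}\tau(k)$ (legitimate as $b_i\ge t-i$) gives $\prod_i\prod_{k=1}^{b_i}\tau(k)=\prod_{k=1}^{t-1}\tau(k)^{t-k}\prod_{\square\in\mu}\tau(t+c_\square)$. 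Equating and solving,
\[
\prod_{0\le i<j\le t-1}\tau(v_i-v_j)=\Bigl(\prod_{k=1}^{t-1}\tau(k)^{t-k}\Bigr)\prod_{\square\in\mu}\frac{\tau(t+c_\square)}{\tau(h_\square)},
\]
and inserting this into \eqref{eqn.vcoding} makes $\prod_k\tau(k)^{t-k}$ cancel the factor $\prod_k\tau(k)^{-(t-k)}$ already present, leaving exactly the right-hand side of \eqref{eqn.content}. (As a sanity check, specialising $\tau(k)=1-p^k$ turns $\prod_{\square\in\mu}\tfrac{\tau(t+c_\square)}{\tau(h_\square)}$ into a principal-specialisation Schur function by Theorem~\ref{thm.hookcontent.classical}.)

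The main obstacle, as I see it, is not a hard computation but getting the bijection right uniformly in the parity of $t$: the congruence condition $\{\mu_i-i\bmod t\}=\{0,\dots,t-1\}$ secretly forces $t\mid|\mu|$, and this is precisely what makes the centering constant $c$ land in the correct coset $t_0+\ZZ$ — without it the inverse map would not yield a legitimate $V_t$-coding. Once that is in hand, the hook--content identity is just the telescoping ``$\prod_{k}\tau(k)=\prod\tau(\mathrm{hooks})\cdot\prod\tau(\mathrm{gaps})$'' for beta-sets, and \eqref{eqn.content.size} is elementary bookkeeping.
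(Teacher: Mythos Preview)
Your proposal is correct and follows essentially the same route as the paper: define $\psi_t$ as $\phi_t$ followed by the shift $v_{j-1}\mapsto v_{j-1}-v_{t-1}-(t-j)$, derive \eqref{eqn.content.size} by substituting into \eqref{eqn.sum.square}, and derive \eqref{eqn.content} by rewriting the Vandermonde factor $\prod_{i<j}\tau(v_i-v_j)$ in \eqref{eqn.vcoding} as $\bigl(\prod_k\tau(k)^{t-k}\bigr)\prod_{\square\in\mu}\tau(t+c_\square)/\tau(h_\square)$.

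The one stylistic difference is in that last rewriting. The paper phrases it through the exploded tableau: it realises $\mu$ inside the tableau as $\{(x,y)\in V_1\times([M_2,-M_1]\setminus(-V_1)):x+y>0\}$, reads off the hook lengths of $\mu$ as the entries $x+y$, and obtains the two $\tau!$-identities geometrically. You instead use the classical beta-set description ($B=\{b_j=\mu_j-j+t\}$, hooks $=b_i-\gamma$ for $\gamma\notin B$), which is the same bijection in more familiar language; your argument is arguably more self-contained since it does not appeal back to the exploded-tableau constructions of Section~\ref{sec.bijection}. You are also more explicit than the paper about why $\psi_t$ is a bijection --- your observation that the congruence condition forces $t\mid|\mu|$, hence the centering constant lands in $t_0+\ZZ$, is exactly the point the paper glosses over.
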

In fact, Theorem \ref{thm.sin.Ht} can also be proved by using 
the multiset hook-content formula (Theorem \ref{thm.content})
and the  hook-content formula (Theorem \ref{thm.hookcontent.classical}).
Conversely, the hook-content formula 
(\ref{eqn.hookcontent.classical}) can be derived by using the multiset hook-content formula (\ref{eqn.content}) and Theorem \ref{thm.sin.Ht}. This justifies the name of this section.

\begin{proof}[Proof of Theorem~\ref{thm.content}]
We give an explicit description of $\mu=\psi_t(\lambda)$. Let $a= M_2(\lambda) = -\min V(\lambda)$ and $\vec{V}=\phi_t(\lambda)$ 
be the $V_t$-coding of $\lambda$.  
We also set $\mu = \vec{\mu} := (\vec{V}_i -t+i+a: i \in \{1,\cdots, t\})$ to be the (ordered) parts of a partition, trailing with at least one zero.
The temporary arrow notation for vectors is meant to emphasize that it is now sorted by decreasing order.
The partition $\mu$ may be rewritten as
$\vec{\mu} =  \vec{V} +a \vec{1}-\vec{b}$ where
$\vec{b} = (t-1,\cdots,0)$ and $\vec{1} = (1,\cdots,1)$. 
We also know that 
$$\frac{1}{2t} \vec{V} \cdot \vec{V} = |\lambda| + \left(\frac{t^2-1}{24}\right)$$ (by Equation~(\ref{eqn.sum.square})), and that $\vec{V}\cdot\vec{1}=0$ (by Equation~(\ref{eqn.sum.zero})). The statement to be proved is then 
$$
|\lambda| = -(\vec{\mu}\cdot\vec{1}) \frac{\vec{\mu}\cdot\vec{1}+t+t^2}{2t^2}+\frac{\vec{\mu}\cdot\vec{\mu}}{2t} +\frac{1}{t}\left(\vec{1}+\vec{b}\right)\cdot \vec{\mu}.
$$
But this follows readily from 
$\vec{b}\cdot\vec{b} = 
{t(t-1)(2t-1)}/{6}$ and
$\vec{b}\cdot\vec{1} = {t(t-1)}/{2}$. 
\medskip

We now move on to Equation~(\ref{eqn.content}).
Define $\tau!(i) = \prod_{j=1}^i \tau(j)$. 
On the other hand, we have
\begin{eqnarray}
\prod_{\square \in \mu} {\tau(c_\square+t)} &=& \frac{\prod_{i=1}^t \tau!(\mu_i+t-i)}{\prod_{i=1}^{t-1} \tau(i)^{t-i} }.
\label{eqn.mu}
\end{eqnarray}
The partition $\mu$ can be viewed in the exploted tableau by the set
$ \{(x,y)\in V_1\times ([M_2,-M_1]\setminus -V_1) \mid x+y>0\}$. Hence
\begin{multline}
\prod_{\square \in \mu} \tau(h_\square) = \prod_{\substack{(x,y) \in V_1\times ([M_2,-M_1]\setminus -V_1)\\ x+y > 0 }} \tau(x+y)\\ = \prod_{x \in V_1} \prod_{\substack{y \in ([M_2,-M_1]\setminus -V_1)\\ x+y > 0 }} \tau(x+y) = \frac{\prod_{x \in V_1}  \tau!(M_2+x)}{\prod_{0\le i,j \le t-1} \tau(v_i - v_j)}.
\label{eqn.tau.factorial}
\end{multline}
Equations~(\ref{eqn.mu}) and (\ref{eqn.tau.factorial}), together with Theorem \ref{thm.vcoding} suffice to establish~(\ref{eqn.content}).
\end{proof}


\bibliographystyle{\jobname}

\bibliography{\jobname}

\end{document}